\documentclass[11pt]{amsart}
\pagestyle{myheadings}
\usepackage{amsmath}
\usepackage{amssymb}
\usepackage{amsthm}
\usepackage{amscd}
\usepackage{enumerate}
\usepackage{verbatim}
\usepackage[all]{xy}
\usepackage{mathrsfs}

\theoremstyle{plain}
\newtheorem{thm}{Theorem}[section]
\newtheorem{prop}[thm]{Proposition}
\newtheorem{lem}[thm]{Lemma}
\newtheorem{cor}[thm]{Corollary}

\theoremstyle{definition}

\newtheorem{rmk}[thm]{Remark}

\newcommand{\rank}{\mathrm{rank}}

\newcommand{\Hom}{\mathrm{Hom}}

\newcommand{\Lie}{\mathrm{Lie}}

\newcommand{\Ker}{\mathrm{Ker}}

\newcommand{\Img}{\mathrm{Im}}
\newcommand{\prjt}{\mathrm{pr}}
\newcommand{\Fil}{\mathrm{Fil}}
\newcommand{\Spf}{\mathrm{Spf}}

\newcommand{\Spec}{\mathrm{Spec}}

\newcommand{\Gal}{\mathrm{Gal}}

\newcommand{\Frac}{\mathrm{Frac}}

\newcommand{\Span}{\mathrm{Span}}

\newcommand{\diag}{\mathrm{diag}}

\newcommand{\Kbar}{\bar{K}}

\newcommand{\okbar}{\mathcal{O}_{\bar{K}}}

\newcommand{\Acrys}{A_{\mathrm{crys}}}

\newcommand{\okey}{\mathcal{O}_K}
\newcommand{\oc}{\mathcal{O}_{\mathbb{C}}}
\newcommand{\oel}{\mathcal{O}_L}

\newcommand{\okeyn}{\mathcal{O}_{K_n}}

\newcommand{\cC}{\mathcal{C}}
\newcommand{\cE}{\mathcal{E}}
\newcommand{\cF}{\mathcal{F}}
\newcommand{\cG}{\mathcal{G}}
\newcommand{\cH}{\mathcal{H}}

\newcommand{\cM}{\mathcal{M}}

\newcommand{\cO}{\mathcal{O}}

\newcommand{\cX}{\mathcal{X}}
\newcommand{\cY}{\mathcal{Y}}

\newcommand{\Ksep}{K^{\mathrm{sep}}}
\newcommand{\oksep}{\mathcal{O}_{K^{\mathrm{sep}}}}

\newcommand{\Tcrys}{T_{\mathrm{crys}}^{*}}

\newcommand{\TSG}{T^{*}_{\SG}}

\newcommand{\okp}{\mathcal{O}_{K_1}}

\newcommand{\tokbar}{\tilde{\mathcal{O}}_{\Kbar}}

\newcommand{\PD}{\mathrm{DP}}

\newcommand{\Gr}{\mathrm{Gr}}
\newcommand{\Mod}{\mathrm{Mod}}

\newcommand{\SG}{\mathfrak{S}}
\newcommand{\SGm}{\mathfrak{M}}
\newcommand{\SGn}{\mathfrak{N}}

\newcommand{\ModSGr}{\mathrm{Mod}_{/\mathfrak{S}_1}^{r,\phi}}
\newcommand{\ModSGrfr}{\mathrm{Mod}_{/\mathfrak{S}}^{r,\phi}}
\newcommand{\ModSGrinf}{\mathrm{Mod}_{/\mathfrak{S}_\infty}^{r,\phi}}

\newcommand{\ModSGf}{\mathrm{Mod}_{/\mathfrak{S}_1}^{1,\phi}}
\newcommand{\ModSGffr}{\mathrm{Mod}_{/\mathfrak{S}}^{1,\phi}}
\newcommand{\ModSGfinf}{\mathrm{Mod}_{/\mathfrak{S}_\infty}^{1,\phi}}

\newcommand{\ModSr}{\mathrm{Mod}_{/S_1}^{r,\phi}}
\newcommand{\ModSrfr}{\mathrm{Mod}_{/S}^{r,\phi}}
\newcommand{\ModSrinf}{\mathrm{Mod}_{/S_\infty}^{r,\phi}}

\newcommand{\ModSffr}{\mathrm{Mod}_{/S}^{1,\phi}}

\newcommand{\OEp}{\mathcal{O}_{\mathcal{E}}}
\newcommand{\Eur}{\mathcal{E}^{\mathrm{ur}}}

\newcommand{\hEur}{\widehat{\mathcal{E}^\mathrm{ur}}}
\newcommand{\hOEur}{\mathcal{O}_{\widehat{{\mathcal{E}^\mathrm{ur}}}}}
\newcommand{\SGur}{\mathfrak{S}^\mathrm{ur}}

\newcommand{\uep}{\underline{\varepsilon}}
\newcommand{\upi}{\underline{\pi}}

\newcommand{\OcX}{\mathcal{O}_{\mathcal{X}}}
\newcommand{\OcY}{\mathcal{O}_{\mathcal{Y}}}
\newcommand{\cXsep}{\mathcal{X}^{\mathrm{sep}}}
\newcommand{\OcXsep}{\mathcal{O}_{\mathcal{X}^\mathrm{sep}}}

\newcommand{\bC}{\mathbb{C}}
\newcommand{\bD}{\mathbb{D}}

\newcommand{\bZ}{\mathbb{Z}}
\newcommand{\bQ}{\mathbb{Q}}
\newcommand{\bF}{\mathbb{F}}

\newcommand{\sF}{\mathscr{F}}

\newcommand{\frt}{\mathfrak{t}}

\begin{document}

\title[Ramification correspondence of finite flat group schemes]{Ramification correspondence of finite flat group schemes over equal and mixed characteristic local fields}
\author{Shin Hattori}
\date{\today}
\email{shin-h@math.kyushu-u.ac.jp}
\address{Faculty of Mathematics, Kyushu University}
\thanks{Supported by Grant-in-Aid for Young Scientists B-21740023.}

\begin{abstract}
Let $p>2$ be a rational prime, $k$ be a perfect field of characteristic $p$ and $K$ be a finite totally ramified extension of the fraction field of the Witt ring of $k$. Let $\cG$ and $\cH$ be finite flat commutative group schemes killed by $p$ over $\okey$ and $k[[u]]$, respectively. In this paper, we show the ramification subgroups of $\cG$ and $\cH$ in the sense of Abbes-Saito are naturally isomorphic to each other when they are associated to the same Kisin module.
\end{abstract}

\maketitle

\section{Introduction}\label{intro}

Let $p$ be a rational prime, $k$ be a perfect field of characteristic $p$, $W=W(k)$ be the Witt ring of $k$ and $K$ be a finite totally ramified extension of $\Frac(W)$ of degree $e$. Let $\phi$ denote the Frobenius endomorphism of $W$. We fix once and for all an algebraic closure $\Kbar$ of $K$, a uniformizer $\pi$ of $K$ and a system of its $p$-power roots $\{\pi_n\}_{n\in\bZ_{\geq 0}}$ in $\Kbar$ with $\pi_0=\pi$ and $\pi_n=\pi_{n+1}^p$. Put $K_n=K(\pi_n)$, $K_\infty=\cup_n K_n$, $G_K=\Gal(\Kbar/K)$ and $G_{K_\infty}=\Gal(\Kbar/K_\infty)$. By the theory of norm fields (\cite{Wi}), there exist a complete discrete valuation field $\cX\simeq k((u))$ of characteristic $p$ with residue field $k$ and an isomorphism of groups
\[
G_{K_\infty}\simeq G_\cX=\Gal(\cXsep/\cX),
\]
where $\cXsep$ is a separable closure of $\cX$. A striking feature of this isomorphism is its compatibility with the upper ramification subgroups of both sides up to a shift by the Herbrand function of $K_\infty/K$ (\cite[Corollaire 3.3.6]{Wi}).

On the other hand, Breuil (\cite{Br}) introduced linear algebraic data over a ring $S$, which are now called as Breuil modules, and proved a classification of finite flat (commutative) group schemes over $\okey$ via these data for $p>2$. He (\cite{Br_nonpub}, \cite{Br_AZ}) also simplified this classification by replacing these data by $\phi$-modules over $W[[u]]$, which are referred as Kisin modules since the latter classification was reproved and investigated further by Kisin (\cite{Ki_Fcrys}, \cite{Ki_BT}). Let us consider the case where finite flat group schemes are killed by $p$. Put $\SG_1=k[[u]]$, which is isomorphic to the $k$-algebra $\OcX$. We let $\phi$ also denote the absolute Frobenius endomorphism of the ring $\SG_1$. For a non-negative integer $r$, let $\ModSGr$ be the category of free $\SG_1$-modules $\SGm$ of finite rank endowed with a $\phi$-semilinear map $\phi_\SGm:\SGm\to\SGm$ such that the cokernel of the map $1\otimes \phi_\SGm:\SG_1\otimes_{\phi,\SG_1}\SGm\to \SGm$ is killed by $u^{er}$. Then there exists an anti-equivalence of categories $\cG(-)$ from $\ModSGf$ to the category of finite flat group schemes over $\okey$ killed by $p$. It is well-known that, for any $r$, we also have an anti-equivalence $\cH(-)$ from $\ModSGr$ to a category of finite flat generically etale group schemes over $\OcX$ killed by their Verschiebung (\cite{SGA3-7A}). Hence a correspondence between finite flat group schemes over $\okey$ and $\OcX$ is obtained, and if finite flat group schemes $\cG$ over $\okey$ and $\cH$ over $\OcX$ are in correspondence with each other, then their generic fiber Galois modules $\cG(\okbar)$ and $\cH(\OcXsep)$ are also in correspondence via the theory of norm fields. Namely, for an object $\SGm$ of $\ModSGf$, we have an isomorphism of $G_{K_\infty}$-modules 
\[
\cG(\SGm)(\okbar)|_{G_{K_\infty}}\to \cH(\SGm)(\OcXsep)
\]
(\cite[Proposition 1.1.13]{Ki_BT}). From this, we can show that the Galois modules $\cG(\SGm)(\okbar)$ and $\cH(\SGm)(\OcXsep)$ have exactly the same greatest upper ramification jump in the classical sense (see also \cite{Ab_period}). 

Besides the classical ramification theory of their generic fibers, finite flat group schemes over a complete discrete valuation ring have their own ramification theory, which was discovered by Abbes-Saito (\cite{AS1}, \cite{AS2}) and Abbes-Mokrane (\cite{AM}). Such a finite flat group scheme $\cG$ has filtrations of upper ramification subgroups $\{\cG^j\}_{j\in\bQ_{>0}}$ (\cite{AM}) and lower ramification subgroups $\{\cG_i\}_{i\in\bQ_{\geq 0}}$ (\cite{Fa}, \cite{Ha1}) as in the classical ramification theory of local fields. For simplicity, let $K$ be a complete discrete valuation field as above and consider a finite flat group scheme $\cG$ over $\okey$. Then, using the upper ramification filtration of $\cG$, we can bound the classical greatest upper ramification jump of the generic fiber $G_K$-module $\cG(\okbar)$ (\cite{Ha1}) and also describe completely the semi-simplification of the restriction to the inertia subgroup of this $G_K$-module (\cite{Ha3}). Moreover, the canonical subgroup of a possibly higher dimensional abelian scheme $A$ over $\okey$ is found in the upper ramification filtration of $A[p^n]$ (\cite{AM}, \cite{Ti}, \cite{Ti_HN}), while the canonical subgroup is also found in the Harder-Narasimhan filtration of $A[p^n]$ defined by Fargues (\cite{Fa_HN}, \cite{Fa}).

In this paper, we establish the following correspondence of the ramification filtrations between finite flat group schemes over $\okey$ and $\OcX$ which is similar to that of the classical ramification jumps of their generic fiber Galois modules stated above.

\begin{thm}\label{main}
Let $p>2$ be a rational prime and $K$ and $\cX$ be as before. Let $\SGm$ be an object of the category $\ModSGf$. Then the natural isomorphism of $G_{K_\infty}$-modules $\cG(\SGm)(\okbar)|_{G_{K_\infty}}\to \cH(\SGm)(\OcXsep)$
induces isomorphisms of the upper and the lower ramification subgroups
\begin{align*}
\cG(\SGm)^j(\okbar)|_{G_{K_\infty}}&\to \cH(\SGm)^j(\OcXsep),\\
\cG(\SGm)_i(\okbar)|_{G_{K_\infty}}&\to \cH(\SGm)_i(\OcXsep)
\end{align*}
for any $j\in\bQ_{>0}$ and $i\in\bQ_{\geq 0}$.
\end{thm}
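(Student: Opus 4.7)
The strategy is to produce parallel explicit presentations of the Hopf algebras of $\cG(\SGm)$ over $\okey$ and $\cH(\SGm)$ over $\OcX$ directly from the Kisin module data, and then to identify the associated Abbes-Saito tubular neighborhoods on the two sides via the theory of norm fields.

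First I would fix a basis of $\SGm$ and write the matrix $\Phi$ of $\phi_\SGm$ in this basis over $k[[u]]$. Exploiting the fact that in characteristic $p$, finite flat group schemes killed by Verschiebung are classified by $\phi$-modules of this sort, the Hopf algebra of $\cH(\SGm)$ admits an explicit presentation
\[
B = \OcX[X_1,\ldots,X_d]/I_B,
\]
where $I_B$ is generated by $d$ polynomials of the form $X_i^p - (\text{linear in the } X_j \text{'s with coefficients coming from } \Phi)$. On the mixed characteristic side, the Breuil/Kisin classification provides, after lifting the entries of $\Phi$ along $u\mapsto\pi$ and choosing a compatible Breuil module structure, a parallel presentation
\[
A_{\cG(\SGm)} = \okey[X_1,\ldots,X_d]/I_A,
\]
where $I_A$ is generated by $d$ polynomials that reduce modulo $p$ (under $u\mapsto\pi$) to the generators of $I_B$. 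In particular, the two presentations differ only by $p$-adic correction terms.

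Next I would apply the Abbes-Saito construction to these presentations in parallel. For lower ramification, the subgroup $\cG(\SGm)_i(\okbar)$ (resp.\ $\cH(\SGm)_i(\OcXsep)$) is the subset of points whose coordinates $x_\ell$ all satisfy $v(x_\ell) \geq i$, for the normalized valuation on $\Kbar$ (resp.\ $\cXsep$). The norm field theory identifies the two valuations on each finite subextension in a way compatible with the identification $G_{K_\infty} \simeq G_{\cX}$, so the matching of presentations above directly matches the two tubes, giving the isomorphism $\cG(\SGm)_i(\okbar)|_{G_{K_\infty}} \to \cH(\SGm)_i(\OcXsep)$. For upper ramification, I would invoke instead the Abbes-Saito geometric-connected-components definition: the rigid tube in the closed unit polydisc cut out by $v(f(\mathbf{x})) \geq j$ for all $f \in I_A$ computes $\cG(\SGm)^j(\okbar)$, and a parallel formula holds for $\cH(\SGm)^j(\OcXsep)$. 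Notably, no Herbrand shift between $G_K$ and $G_{K_\infty}$ is needed here, since the upper numbering of a finite flat group scheme is intrinsic to its Hopf algebra over the base ring rather than being defined relative to an ambient absolute Galois group.

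The main obstacle will be to verify that the Abbes-Saito tubes on the two sides really correspond under this matching of Hopf-algebra presentations: the mixed characteristic relations differ from the reductions of the generators of $I_B$ by $p$-adic correction terms arising from the lifting, and one must show that these corrections are \emph{small enough} that they do not disturb the geometric connected components of the tube at any positive radius. A related subtlety is that the Abbes-Saito filtrations depend a priori on the chosen presentation, so one must pick the explicit presentations above carefully (so that they correspond to each other through a uniform recipe from $\SGm$) and invoke, or reprove in this setting, the intrinsic nature of the resulting filtration on the group scheme. These are the technical heart of the comparison.
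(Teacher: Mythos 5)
Your treatment of the lower ramification filtration is essentially the paper's: one matches explicit presentations of the two Hopf algebras so that they become isomorphic as \emph{pointed schemes} after reduction modulo $\pi^e$ on one side and $u^e$ on the other, and since $\cG(\SGm)_i$ and $\cH(\SGm)_i$ vanish for $i>e/(p-1)<e$, this truncated identification captures the entire lower filtration. Two points you gloss over but which the paper must handle: Breuil's explicit presentation of the affine algebra requires the representing matrix to have entries in $k[u^p]$, which forces a base change from $K$ to $K_1=K(\pi_1)$ and hence a separate compatibility-with-base-change statement (Proposition \ref{BCK}); and it is not enough to produce \emph{some} bijection of points modulo $m^{\geq i}$ --- one must check that the matching of presentations is compatible with the \emph{natural} isomorphism $\varepsilon_\SGm$ coming from the crystalline construction, which is the content of the last part of the proof of Theorem \ref{keythm}.

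The upper-ramification half of your plan has a genuine gap. The two presentations live over rings of different characteristics, and the only bridge between them is the truncated identification $\okey/\pi^e\okey\simeq k[u]/(u^e)$; there is no ring map relating the ambient polydiscs over $\okbar$ and over $R$ beyond that level. But the Abbes--Saito tube at level $j$ is cut out by $v(f(x))\geq j$ and is therefore sensitive to the defining equations up to valuation $j$, while the upper ramification jumps of a $p$-torsion group scheme go up to $pe/(p-1)>e$. So for $j\in(e,\,pe/(p-1)]$ the ``$p$-adic correction terms'' are \emph{not} small relative to the radius of the tube, and your matching of geometric connected components is unjustified exactly in the range where it is needed. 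This is precisely why the paper does not argue this way: it converts the upper filtration of $\cG(\SGm)$ into the lower filtration of the dual via the Tian--Fargues duality $\cG^j(\okbar)^{\bot}=(\cG^\vee)_{l(j)+}(\okbar)$ with $l(j)=e/(p-1)-j/p\in[0,e/(p-1))$, proves an equal-characteristic analogue for $\cH(\SGm)$ (Theorem \ref{ramdualp}) using Liu's duality of Kisin modules --- ordinary Cartier duality being unavailable since it destroys generic \'etaleness in characteristic $p$ --- checks that the two dualities are intertwined by $\varepsilon_\SGm$ (Proposition \ref{CarGM}), and only then applies the truncated comparison, which now suffices because $l(j)$ always lies below $e/(p-1)$. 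The author explicitly records in the final remark that a direct ``upper'' comparison of tubular neighborhoods of the kind you propose is an open problem.
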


This theorem enables us to reduce the study of ramification of finite flat group schemes over $\okey$ killed by $p$ to the case where the base is a complete discrete valuation ring of equal characteristic. This makes calculations of ramification of finite flat group schemes over $\okey$, for example as in \cite[Section 5]{Ha3}, much easier. We remark that, for the Harder-Narasimhan filtration, such a correspondence of filtrations of $\cG(\SGm)$ and $\cH(\SGm)$ follows easily from the definition.

A key idea to prove the theorem is to switch from the upper ramification filtration to the lower ramification filtration via Cartier duality, which the author learned from works of Tian (\cite{Ti}) and Fargues (\cite{Fa}). Let $\cG$ be a finite flat group scheme over $\okey$ killed by $p$ and $\cG^\vee$ be its Cartier dual. Then they showed that the upper ramification subgroup $\cG^j(\okbar)$ is the orthogonal subgroup of the lower ramification subgroup $(\cG^\vee)_i(\okbar)$ for some $i$ via the Cartier pairing
\[
\cG(\okbar)\times \cG^\vee(\okbar)\to \bZ/p\bZ(1).
\]
We prove a version of this theorem for the group scheme $\cH(\SGm)$ over $\OcX$. Since we are in characteristic $p$, usual Cartier dual does not preserve the generic etaleness of finite flat group schemes. Instead, we use a duality theory of Liu (\cite{Li_FC}) for Kisin modules. This requires us to check compatibilities of these two duality theories, though it is straightforward to carry out. Thus we reduce ourselves to proving the correspondence of the lower ramification subgroups of $\cG(\SGm)$ and $\cH(\SGm)$. This is a consequence of the fact that, up to the base changes from $K$ and $\cX$ to the extensions generated by $p$-th roots of their uniformizers $\pi$ and $u$, the schemes $\cG(\SGm)$ modulo $\pi^e$ and $\cH(\SGm)$ modulo $u^e$ become isomorphic to each other, not as group schemes but as pointed schemes. We prove this fact by using Breuil's explicit computation of the affine algebra of a finite flat group scheme over $\okey$ killed by $p$ in terms of its corresponding Breuil module (\cite[Section 3]{Br}), after showing that his classification of finite flat group schemes is compatible with the base change from $K$ to $K_n$.

It should be mentioned that a classification of finite flat group schemes via Kisin modules is also proved for $p=2$ by Kisin (\cite{Ki_2}) for the case of unipotent finite flat group schemes and independently by Kim (\cite{Kim_2}), Lau (\cite{Lau_2}) and Liu (\cite{Li_2}) for the general case. However, the author does not know whether a similar correspondence of ramification between characteristic zero and two holds for finite flat group schemes with non-trivial multiplicative parts.



\section{Review of Cartier duality theory for Kisin modules}\label{Cart}

Let $p>2$ be a rational prime and $K$ be a complete discrete valuation field of mixed characteristic $(0,p)$ with perfect residue field $k$, as in Section \ref{intro}. It is well-known that finite flat group schemes over $\okey$ killed by some $p$-power are classified by linear algebraic data, Breuil modules (\cite{Br}) or Kisin modules (\cite{Br_nonpub}, \cite{Br_AZ}, \cite{Ki_Fcrys}, \cite{Ki_BT}). For these data, corresponding notions of duality to Cartier duality for finite flat group schemes are introduced by Caruso and Liu (\cite{Ca_th}, \cite{Ca_D}, \cite{CL}, \cite{Li_FC}), which play key roles in the integral $p$-adic Hodge theory. In this section, we recall the definitions of these data and the theory of Cartier duality for Kisin modules.

\subsection{Breuil and Kisin modules}

Let $E(u)\in W[u]$ be the Eisenstein polynomial of the uniformizer $\pi$ over $W$. Put $F(u)=p^{-1}(u^e-E(u))$. This defines units in the rings $\SG=W[[u]]$ and $\SG_1=k[[u]]$. The $\phi$-semilinear continuous ring endomorphisms of these rings defined by $u\mapsto u^p$ are also denoted by $\phi$. Let $r$ be a non-negative integer. Then a Kisin module over $\SG$ of $E$-height $\leq r$ is an $\SG$-module $\SGm$ endowed with a $\phi$-semilinear map $\phi_\SGm:\SGm \to \SGm$ such that the cokernel of the map $1\otimes \phi_\SGm: \SG\otimes_{\phi,\SG}\SGm\to \SGm$ is killed by $E(u)^r$. We write $\phi_\SGm$ also as $\phi$ if there is no risk of confusion. A morphism of Kisin modules over $\SG$ is an $\SG$-linear map which is compatible with $\phi$'s of the source and the target. The Kisin modules over $\SG$ of $E$-height $\leq r$ form a category with an obvious notion of exact sequences. We let $\ModSGr$ ({\it resp.} $\ModSGrfr$) denote its full subcategory consisting of $\SGm$ which is free of finite rank over $\SG_1$ ({\it resp.} $\SG$). We also let $\ModSGrinf$ denote its full subcategory consisting of $\SGm$ such that $\SGm$ is a finite $\SG$-module which is $p$-power torsion and $u$-torsion free.

The categories of Kisin modules have a natural duality theory (\cite[Subsection 2.4]{CL}). For an object $\SGm$ of the category $\ModSGrinf$, we let $\SGm^\vee$ denote its dual object as in {\it loc. cit.}. We give here an explicit description of the duality theory of the category $\ModSGr$ for the convenience of the reader. Let $\SGm$ be an object of this category. By definition, the $\SG_1$-module $\SGm^\vee$ is $\Hom_{\SG}(\SGm,\SG_1)$. Choose a basis $e_1,\ldots, e_d$ of the free $\SG_1$-module $\SGm$ and let $e^{\vee}_{1},\ldots,e^{\vee}_{d}$ denote its dual basis. Define a matrix $A\in M_d(\SG_1)$ by
\[
\phi_{\SGm}(e_1,\ldots,e_d)=(e_1,\ldots,e_d)A.
\]
Put $c_0=p^{-1}E(0)\in W^\times$. Then the $\phi$-semilinear map $\phi_{\SGm^\vee}:\SGm^\vee\to\SGm^\vee$ is given by
\[
\phi_{\SGm^\vee}(e^{\vee}_{1},\ldots,e^{\vee}_{d})=(e^{\vee}_{1},\ldots,e^{\vee}_{d})(E(u)/c_0)^r({}^\mathrm{t}\!A)^{-1}.
\]

For $r<p-1$, we also have categories $\ModSr$, $\ModSrinf$ and $\ModSrfr$ of Breuil modules defined as follows. Let $S$ be the $p$-adic completion of the divided power envelope $W[u]^\PD$ of $W[u]$ with respect to the ideal $(E(u))$ and the compatibility condition with the canonical divided power structure on $pW$. The ring $S$ has a natural filtration $\Fil^iS$ defined as the closure in $S$ of the ideal generated by $E(u)^j/j!$ for integers $j\geq i$. The $\phi$-semilinear continuous ring endomorphism of $S$ defined by $u\mapsto u^p$ is also denoted by $\phi$. For $0\leq i\leq p-1$, we have $\phi(\Fil^iS)\subseteq p^iS$ and put $\phi_i=p^{-i}\phi|_{\Fil^iS}$. These filtration and $\phi_i$'s induce a similar structure on the ring $S_n=S/p^nS$. Put $c=\phi_1(E(u))\in S^\times$. Then we let ${}'\Mod_{/S}^{r,\phi}$ denote the category of $S$-modules $\cM$ endowed with an $S$-submodule $\Fil^r\cM$ containing $(\Fil^rS)\cM$ and a $\phi$-semilinear map $\phi_{r,\cM}:\Fil^r\cM\to \cM$ satisfying $\phi_{r,\cM}(s_r m)=c^{-r}\phi_r(s_r)\phi_{r,\cM}(E(u)^rm)$ for any $s_r\in \Fil^rS$ and $m\in\cM$. A morphism of this category is defined to be a homomorphism of $S$-modules compatible with $\Fil^r$'s and $\phi_r$'s. We drop the subscript $\cM$ of $\phi_{r,\cM}$ if no confusion may occur. The category ${}'\Mod_{/S}^{r,\phi}$ has an obvious notion of exact sequences. Its full subcategory consisting of $\cM$ such that $\cM$ is a free $S_1$-module of finite rank and the image $\phi_{r,\cM}(\Fil^r\cM)$ generates the $S$-module $\cM$ is denoted by $\ModSr$. We let $\ModSrinf$ denote the smallest full subcategory of ${}'\Mod_{/S}^{r,\phi}$ containing $\ModSr$ and stable under extensions, and $\ModSrfr$ denote the full subcategory consisting of $\cM$ such that $\cM$ is a free $S$-module of finite rank, the $S$-module $\cM/\Fil^r\cM$ is $p$-torsion free and the image $\phi_{r,\cM}(\Fil^r\cM)$ generates the $S$-module $\cM$.

The categories $\ModSGrinf$ and $\ModSrinf$ for $r<p-1$ are in fact equivalent. We define an exact functor $\cM_\SG:\ModSGrinf\to \ModSrinf$ by putting $\cM_\SG(\SGm)=S\otimes_{\phi,\SG}\SGm$ with
\begin{align*}
&\Fil^r\cM_\SG(\SGm)=\Ker(S\otimes_{\phi,\SG}\SGm\overset{1\otimes\phi}{\to}(S/\Fil^rS)\otimes_{\SG}\SGm),\\
&\phi_r:\Fil^r\cM_\SG(\SGm)\overset{1\otimes \phi}{\to}\Fil^rS\otimes_{\SG}\SGm\overset{\phi_r\otimes 1}{\to} S\otimes_{\phi,\SG}\SGm=\cM_\SG(\SGm).
\end{align*}
Then the functor $\cM_\SG$ is an equivalence of categories (\cite[Theorem 2.3.1]{CL}). Similarly, we have an equivalence of categories $\ModSGrfr\to \ModSrfr$ (\cite[Theorem 2.2.1]{CL}), which is denoted also by $\cM_\SG$.


\subsection{The associated Galois representations and duality}\label{DuGal}

Next we recall constructions of the associated Galois representations to Breuil and Kisin modules and their duality theories. Let $v_K$ be the valuation on $K$ which is normalized as $v_K(\pi)=1$ and we extend it naturally to $\Kbar$. Set $\tokbar=\okbar/p\okbar$ and $\bC$ to be the completion of $\Kbar$. Consider the ring
\[
R=\varprojlim (\tokbar \leftarrow \tokbar \leftarrow \cdots),
\]
where the transition maps are defined by $x\mapsto x^p$. For an element $x=(x_0,x_1,\ldots)\in R$ with $x_i\in\tokbar$, we put $x^{(m)}=\lim_{n\to\infty}\hat{x}_{n+m}^{p^n}\in\cO_\bC$, where $\hat{x}_i$ is a lift of $x_i$ in $\okbar$. This is independent of the choice of lifts. Then the ring $R$ is a complete valuation ring of characteristic $p$ with valuation $v_R(x)=v_K(x^{(0)})$. We put $m_R^{\geq i}=\{x\in R\mid v_R(x)\geq i\}$ and similarly for $m_R^{>i}$. Define an element $\upi$ of $R$ by $\upi=(\pi,\pi_1,\pi_2,\ldots)$, where we abusively write $\pi_n$ also for its image in $\tokbar$. The $W$-algebras $R$ and $W(R)$ have natural $\SG$-algebra structures defined by the continuous map $\SG\to W(R)$ which sends $u$ to the Teichm\"uller lift $[\upi]$ of the element $\upi$. Note that the identification $G_{K_\infty}\simeq G_\cX$ stated in Section \ref{intro} is given by the action of $G_{K_\infty}$ on the ring $R$ and the inclusion $k[[u]]\to R$ defined by $u\mapsto \upi$ (\cite[Subsection 3.3]{Br_AZ}).

We set $\OEp$ to be the $p$-adic completion of $\SG[1/u]$ and put $\cE=\Frac(\OEp)$. We extend the inclusion $\SG\to W(R)$ to an inclusion $\OEp\to W(\Frac(R))$. The maximal unramified extension of $\cE$ in the field $W(\Frac(R))[1/p]$ is denoted by $\Eur$ and its closure in the same field by $\hEur$. We put $\SGur=\hOEur\cap W(R)$ inside the ring $W(\Frac(R))$. The Galois group $G_{K_\infty}$ acts naturally on the $\SG$-algebra $\SGur$. For an object $\SGm\in \ModSGrinf$, its associated $G_{K_\infty}$-module $\TSG(\SGm)$ is by definition
\[
\TSG(\SGm)=\Hom_{\SG,\phi}(\SGm, \bQ_p/\bZ_p\otimes \SGur).
\]
If the $\SG$-module $\SGm$ is killed by $p^n$, then we have a natural identification $\TSG(\SGm)\simeq \Hom_{\SG,\phi}(\SGm,W_n(R))$ (\cite[Proposition 1.8.3]{F_PhiGamma}). Similarly, for an object $\SGm$ of the category $\ModSGrfr$, its associated $G_{K_\infty}$-module is defined as 
\[
\TSG(\SGm)=\Hom_{\SG,\phi}(\SGm, \SGur).
\]

Consider the natural $W$-algebra surjection $\theta: W(R)\to \mathcal{O}_{\bC}$ defined by
\[
\theta((z_0,z_1,\ldots))=\sum_{i=0}^{\infty}p^i z_{i}^{(i)},
\]
where $z_i$ is an element of $R$. The $p$-adic completion of the divided power envelope of $W(R)$ with respect to the ideal $\Ker(\theta)$ is denoted by $\Acrys$. The ring $\Acrys$ has a Frobenius endomorphism $\phi$ and a $G_K$-action induced by those of $R$, and also a filtration induced by the divided power structure. The $W$-algebra homomorphism $W[u]\to W(R)$ defined by $u\mapsto [\upi]$ induces a map $S\to \Acrys$, by which we consider the ring $\Acrys$ as an $S$-algebra. For $0\leq r\leq p-2$, the ring $\Acrys$ has a natural structure as an object of ${}'\Mod_{/S}^{r,\phi}$ by putting $\phi_r=p^{-r}\phi|_{\Fil^r\Acrys}$. 

Let $\cM$ be an object of $\ModSrinf$. Then we also have the associated $G_{K_\infty}$-module
\[
\Tcrys(\cM)=\Hom_{S,\Fil^r,\phi_r}(\cM,\bQ_p/\bZ_p\otimes \Acrys).
\]
If $\cM$ is killed by $p$, then we have a natural identification
\[
\Tcrys(\cM)\simeq \Hom_{S,\Fil^r,\phi_r}(\cM,R^\PD),
\]
where $R^\PD$ is the divided power envelope of $R$ with respect to the ideal $m_R^{\geq e}$ and we identify this ring with $\Acrys/p\Acrys$. Similarly, for an object $\cM$ of the category $\ModSrfr$, we put
\[
\Tcrys(\cM)=\Hom_{S,\Fil^r,\phi_r}(\cM,\Acrys).
\]
The functors $\TSG$ and $\Tcrys$ from $\ModSGrinf$ and $\ModSrinf$ to the category of $G_{K_\infty}$-modules are exact. For an object $\SGm$ of $\ModSGr$, we have the equality $\dim_{\bF_p}(\TSG(\SGm))=\rank_{\SG_1}(\SGm)$ and a similar assertion also holds for $\Tcrys$. Then, for an object $\SGm$ of the category $\ModSGrinf$ or $\ModSGrfr$, we have an isomorphism of $G_{K_\infty}$-modules 
\[
\TSG(\SGm)\to\Tcrys(\cM_\SG(\SGm))
\]
defined by $f \mapsto (s\otimes m\mapsto s\phi(f(m)))$ (\cite[Lemma 3.3.4]{Li_BC}).

To describe the duality theory for $\TSG(\SGm)$, let us also fix a system $\{\zeta_{p^n}\}_{n\in\bZ_{\geq 0}}$ of $p$-power roots of unity in $\Kbar$ such that $\zeta_1=1$, $\zeta_p\neq 1$ and $\zeta_{p^n}=\zeta_{p^{n+1}}^p$, and set an element $\uep\in R$ to be $\uep=(1, \zeta_p, \zeta_{p^2},\ldots)$ with an abusive notation as before. Define an element $t$ of $\Acrys$ by 
\[
t=\log([\uep])=\sum_{i=1}^{\infty}(-1)^{i-1}\frac{([\uep]-1)^i}{i}.
\]
Put $c_0=p^{-1}E(0)\in W^\times$ and
\[
\lambda=\prod_{i=1}^{\infty}\phi^i(E(u)/E(0)) \in S^\times.
\]
Let $\SG(r)=\SG\mathbf{e}$ be the object of the category $\ModSGffr$ of rank one with a basis $\mathbf{e}$ satisfying $\phi(\mathbf{e})=(E(u)/c_0)^r\mathbf{e}$. Similarly, we define an object $S(r)$ of the category $\ModSffr$ of rank one by $S(r)=S\mathbf{e}=\Fil^r S(r)$ and $\phi_r(\mathbf{e})=\mathbf{e}$. Then the Breuil module $\cM_\SG(\SG(r))$ is isomorphic to $S(r)$ via the multiplication by $\lambda^{-r}$. Thus we have isomorphisms of $G_{K_\infty}$-modules
\[
\TSG(\SG(r))\to \Tcrys(\cM_\SG(\SG(r)))\to \Tcrys(S(r))=\bZ_p(\mathbf{e}\mapsto t^r).
\]
Their composite is given by $f\mapsto (\mathbf{e}\mapsto \lambda^r\phi(f(\mathbf{e})))$. Set an element $\frt\in\Acrys$ to be
\[
\frt=(\frac{E(u)}{c_0})^{-1}\lambda^{-1}t.
\]
Then the element $\frt$ is contained in the subring $\SGur$ (\cite[Subsection 3.2]{Li_FC}).

Let $\SGm$ be an object of the category $\ModSGrfr$ and $\SGm^\vee$ be its dual object. Then as in \cite[Subsection 2.4]{CL}, 
the evaluation map $\SGm\times \SGm^\vee \to \SG(r)$ induces a natural perfect pairing of $G_{K_\infty}$-modules
\[
\langle\ ,\ \rangle_\SGm:\TSG(\SGm)\times \TSG(\SGm^\vee) \to \TSG(\SG(r))\simeq \bZ_p\mathfrak{t}^r\subseteq \SGur,
\]
which gives an isomorphism of $G_{K_\infty}$-modules
\[
\TSG(\SGm^\vee) \to \Hom_{\bZ_p}(\TSG(\SGm),\bZ_p\mathfrak{t}^r).
\]
We also have a similar perfect pairing $\langle\ ,\ \rangle_\SGm$ for the category $\ModSGrinf$ (\cite{CL}).



\section{Cartier duality for upper and lower ramification subgroups}\label{ULdual}

Let $\cG$ be a finite flat generically etale (commutative) group scheme over the ring of integers of a complete discrete valuation field. Abbes-Mokrane (\cite{AM}) initiated a study of ramification of $\cG$ using a ramification theory of Abbes-Saito (\cite{AS1}, \cite{AS2}). As in the classical ramification theory of local fields, $\cG$ has upper and lower ramification subgroups (\cite{Fa}, \cite{Ha1}). When the base field is of mixed characteristic, Tian proved that the upper and the lower ramification subgroups correspond to each other via usual Cartier duality if $\cG$ is killed by $p$ (\cite{Ti}), and Fargues gave a much simpler proof of this theorem (\cite{Fa}). In this section, after briefly recalling the ramification theory of finite flat group schemes, we show a variant of Tian's theorem for a complete discrete valuation field of equal characteristic $p$ with perfect residue field, using the duality techniques presented in the previous section instead of Cartier duality of finite flat group schemes.

\subsection{Ramification theory of finite flat group schemes}

In this subsection, we let $K$ denote a complete discrete valuation field, $\pi$ a uniformizer of $K$, $\okey$ the ring of integers and $\Ksep$ a separable closure of $K$. Let $v_K$ be the valuation of $K$ normalized as $v_K(\pi)=1$ and extend it naturally to $\Ksep$. We put $m_{\Ksep}^{\geq i}=\{x\in\oksep\mid v_K(x)\geq i\}$
and similarly for $m_{\Ksep}^{>i}$. We also put $G_K=\Gal(\Ksep/K)$.

Let $\cG=\Spec(B)$ be a finite flat generically etale group scheme over $\okey$. Then $\cG$ is locally of complete intersection over $\okey$ (\cite[Proposition 2.2.2]{Br}) and we have a natural surjection of $G_K$-modules $\cG(\oksep)\to \sF^j(B)$ for $j\in \bQ_{>0}$, where $\sF^j(B)$ is the set of geometric connected components of the $j$-th tubular neighborhood of $B$ with a group structure induced by that of $\cG$ (\cite[Subsection 2.3]{AM}). We define the $j$-th upper ramification subgroup $\cG^j$ of $\cG$ for $j\in \bQ_{>0}$ to be the scheme-theoretic closure in $\cG$ of (the finite subgroup scheme of $\cG\times\Spec(K)$ associated to) the kernel of this surjection. On the other hand, for $i\in \bQ_{\geq 0}$, the scheme-theoretic closure in $\cG$ of the kernel of the natural homomorphism $\cG(\oksep)\to \cG(\oksep/m_{\Ksep}^{\geq i})$ is denoted by $\cG_i$ and called the $i$-th lower ramification subgroup of $\cG$. In particular, we have the equality 
\[
\cG_i(\oksep)=\Ker(\cG(\oksep)\to \cG(\oksep/m_{\Ksep}^{\geq i})).
\]
We also put
\[
\cG^{j+}(\oksep)=\bigcup_{j'>j}\cG^{j'}(\oksep),\quad \cG_{i+}(\oksep)=\bigcup_{i'>i}\cG_{i'}(\oksep)
\]
and set $\cG^{j+}$ and $\cG_{i+}$ to be their scheme-theoretic closures in $\cG$, respectively.

As in the classical case, the upper ({\it resp.} lower) ramification subgroups are compatible with quotients ({\it resp.} subgroups). Namely, for a faithfully flat homomorphism $\cG\to \cG''$ of finite flat group schemes over $\okey$, the image of $\cG^j(\oksep)$ in $\cG''(\oksep)$ coincides with $(\cG'')^j(\oksep)$ (\cite[Lemme 2.3.2]{AM}). From the definition, we also see that for a closed immersion $\cG'\to \cG$ of finite flat group schemes over $\okey$, the subgroup $\cG'(\oksep)\cap \cG_i(\oksep)$ coincides with $(\cG')_i(\oksep)$. In addition, for a finite extension $L/K$ of relative ramification index $e'$, we have natural isomorphisms
\[
(\oel\times_{\okey}\cG)^{je'}\to \oel\times_{\okey}\cG^j,\quad (\oel\times_{\okey}\cG)_{ie'}\to \oel\times_{\okey}\cG_i
\]
of finite flat group schemes over $\oel$.

Suppose that $K$ is of mixed characteristic $(0,p)$ and $\cG$ is killed by $p^n$. Then we have $\cG^j=0$ for $j>e(n+1/(p-1))$, where $e$ is the absolute ramification index of $K$ (\cite[Theorem 7]{Ha1}). Let $\cG^\vee$ be the Cartier dual of $\cG$ and consider the Cartier pairing
\[
\langle\ ,\ \rangle_\cG:\cG(\okbar)\times \cG^\vee(\okbar) \to \bZ/p^n\bZ(1).
\]
When $\cG$ is killed by $p$, we have the following duality theorem for the upper and the lower ramification subgroups of $\cG$ (\cite[Theorem 1.6]{Ti}, \cite[Proposition 6]{Fa}).
\begin{thm}\label{TF}
Let $K$ be a complete discrete valuation field of mixed characteristic $(0,p)$ with absolute ramification index $e$ and $\cG$ be a finite flat group scheme over $\okey$ killed by $p$. For $j\leq pe/(p-1)$, we have an equality
\[
\cG^j(\okbar)^{\bot}=(\cG^\vee)_{l(j)+}(\okbar)
\]
of subgroups of $\cG^\vee(\okbar)$, where $\bot$ means the orthogonal subgroup with respect to the pairing $\langle\ ,\ \rangle_\cG$ and $l(j)=e/(p-1)-j/p$.
\end{thm}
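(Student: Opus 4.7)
The strategy is to study the Cartier pairing $\langle\ ,\ \rangle_\cG$ at the level of affine algebras, following a Fargues-style approach. Write $\cG=\Spec(B)$ and $\cG^\vee=\Spec(B^\vee)$. A point $y\in\cG^\vee(\okbar)$ corresponds to a grouplike element $\tilde y\in B\otimes_{\okey}\okbar$ (satisfying $\Delta(\tilde y)=\tilde y\otimes\tilde y$ and $\varepsilon(\tilde y)=1$); writing $\tilde y=1+z_y$ with $z_y$ in the augmentation ideal of $B\otimes\okbar$, the pairing takes the form
\[
\langle x,y\rangle_\cG=x(\tilde y)=1+x(z_y)\in\mu_p(\okbar).
\]
Since $\mu_p(\okbar)$ consists of $1$ and of elements of normalized valuation $e/(p-1)$, the vanishing of $\langle x,y\rangle_\cG-1$ is equivalent to the strict bound $v_K(x(z_y))>e/(p-1)$. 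All of the analysis is thus reduced to estimating this single valuation.

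I would first prove the containment $(\cG^\vee)_{l(j)+}(\okbar)\subseteq\cG^j(\okbar)^{\bot}$. The condition $y\in(\cG^\vee)_{l(j)+}(\okbar)$ means that, as a ring homomorphism $B^\vee\to\okbar$, the point $y$ is congruent to the unit section modulo $m_{\okbar}^{>l(j)}$; after dualizing this translates into the statement that the coefficients of $z_y$ in a basis of $B$ adapted to the augmentation ideal lie in $m_{\okbar}^{>l(j)}$. The condition $x\in\cG^j(\okbar)$ is more subtle: it says that $x$ factors through the $j$-th tubular neighborhood of the unit section of $\cG$ in the sense of Abbes--Mokrane, which, using the fact that $\cG$ is locally a complete intersection over $\okey$ (as recalled in the body of the paper), forces the evaluation of $x$ on augmentation elements to gain a factor related to $j/p$. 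Combining the two contributions gives $v_K(x(z_y))> j/p+l(j)=e/(p-1)$, as required.

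For the reverse inclusion $\cG^j(\okbar)^{\bot}\subseteq(\cG^\vee)_{l(j)+}(\okbar)$, I would argue by a cardinality count. The Cartier pairing is perfect, so $|\cG^j(\okbar)^{\bot}|=|\cG^\vee(\okbar)|/|\cG^j(\okbar)|$. On the other hand, the order of $(\cG^\vee)_{l(j)+}(\okbar)$ can be read off from the Harder--Narasimhan or the Raynaud-type description of $\cG^\vee$, and one checks, by comparing with the known computation of $|\cG^j(\okbar)|$ for $\cG$ killed by $p$ (for example via the degree function on finite flat group schemes), that the two orders agree. Combined with the already established containment, this forces equality.

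The main obstacle is the middle step: making precise how the tubular-neighborhood condition defining the upper filtration on $\cG$ translates, via the explicit pairing formula above, into a valuation improvement by exactly a factor of $p$ when evaluating $x$ on $z_y$. This is the place where the hypothesis $j\leq pe/(p-1)$ enters essentially, and where the assumption that $p$ kills $\cG$ is unavoidable: one must exploit either the Raynaud classification of group schemes of type $(p,\ldots,p)$, or an analysis of grouplike elements of $B$ modulo the $p$-th power of the augmentation ideal, to relate the Abbes--Saito tubular filtration of $B$ to the $\pi$-adic filtration of the coordinates of the dual point $y$. Without this matching, the numerology $j/p+l(j)=e/(p-1)$ that drives the theorem cannot be produced.
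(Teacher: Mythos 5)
You should first note that the paper does not actually prove Theorem \ref{TF}: it is quoted from Tian and Fargues, and the only argument of this type carried out in the paper is the equal-characteristic analogue, Theorem \ref{ramdualp}, which follows Fargues' strategy. That strategy is quite different from yours: after a finite base change making the Galois action on $\cG^\vee(\okbar)$ trivial, one takes an arbitrary point $x^\vee\in\cG^\vee(\okbar)$, forms the order-$p$ closed subgroup scheme it generates (scheme-theoretic closure of $\bF_p x^\vee$), dualizes to get a rank-one quotient of $\cG$, and invokes the compatibility of the upper filtration with quotients and of the lower filtration with closed subgroups. This reduces everything to order-$p$ group schemes, where the affine algebras are monogenic of the form $X^p-aX$ and $X^p-bX$ with $ab=wp$, and both filtrations are computed explicitly. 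In rank one your ``main obstacle'' simply does not arise, because the tubular neighborhoods of a monogenic algebra are computable by hand.

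As written, your proposal has a genuine gap exactly where you say it does, and it is not a technical loose end but the entire content of the theorem. The condition $x\in\cG^j(\okbar)$ is a statement about connected components of the $j$-th tubular neighborhood of the whole algebra $B$; for $B$ of rank $p^d$ with $d>1$ this does not translate, by any formal argument, into a valuation inequality of the shape $v_K(x(z))\geq v_K(z\bmod\text{unit section})+j/p$ for $z$ in the augmentation ideal, and no mechanism is offered for producing the factor $1/p$. The second half of your argument is also unsubstantiated: you assert that $|(\cG^\vee)_{l(j)+}(\okbar)|$ and $|\cG^\vee(\okbar)|/|\cG^j(\okbar)|$ ``agree'' by comparison with a known computation of $|\cG^j(\okbar)|$, but no such pointwise formula is available for general $\cG$ killed by $p$; at best one has aggregate (conductor--discriminant or degree) identities, and to extract equality for every $j$ from those you would need to combine them with the one-sided containment by an integration over $j$, which you do not set up. To repair the proof along the lines actually used in the literature and in the paper, replace both halves by the reduction to rank one sketched above; the perfectness of the Cartier pairing is then only needed to pass from ``every line in $(\cG^\vee)_{l(j)+}(\okbar)$ pairs trivially with $\cG^j(\okbar)$, and every line not in it does not'' to the stated equality of orthogonal subgroups.
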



\subsection{Kisin modules and finite flat group schemes of equal characteristic}

Consider the complete discrete valuation field $\cX=k((u))$ with uniformizer $u$ and perfect residue field $k$. We embed $\OcX=\SG_1=k[[u]]$ into $R$ by $u\mapsto \upi$ as before. Then $R$ is the completion of the ring of integers of an algebraic closure of $\cX$. We let $\cXsep$ denote the separable closure of $\cX$ in $\Frac(R)$. 

Let $\cY$ be a finite extension of $\cX$ in $\Frac(R)$ of relative ramification index $e'$ and let $\phi$ also denote the absolute Frobenius endomorphism of $\cY$. We identify $\OcY$ with $l[[v]]$ for a finite extension $l$ of $k$. Define a category $\Mod^{r,\phi}_{/\OcY}$ to be the category of free $\OcY$-modules $\SGn$ of finite rank endowed with a $\phi$-semilinear map $\phi_\SGn:\SGn\to\SGn$ such that the cokernel of the map $1\otimes \phi_\SGn:\OcY\otimes_{\phi,\OcY}\SGn\to \SGn$ is killed by $u^{er}$.

For an object $\SGm$ of $\Mod^{r,\phi}_{/\OcX}=\ModSGr$, we consider the $\OcY$-module $\OcY\otimes_{\OcX}\SGm$ with the $\phi$-semilinear map $\phi\otimes \phi_{\SGm}$ as an object of $\Mod^{r,\phi}_{/\OcY}$. This defines a base change functor $\OcY\otimes_{\OcX}-:\ModSGr\to \Mod^{r,\phi}_{/\OcY}$. We also have a dual object $\SGn^\vee$ for an object $\SGn$ of the category $\Mod^{r,\phi}_{/\OcY}$, which is defined similarly to the duality theory of $\ModSGr$. Moreover, for an object $\SGm$ of $\ModSGr$, we have a natural isomorphism
\[
\OcY\otimes_{\OcX}\SGm^\vee \to (\OcY\otimes_{\OcX}\SGm)^\vee
\]
of $\Mod^{r,\phi}_{/\OcY}$, by which we identify both sides.

For a finite flat group scheme $\cH$ over a base scheme of characteristic $p$, we let $F_\cH$ and $V_\cH$ denote the Frobenius and the Verschiebung of $\cH$, respectively. We say that a finite flat group scheme $\cH$ over $\OcY$ is $v$-height $\leq s$ if its Verschiebung $V_\cH$ is zero and the cokernel of the natural map
\[
V_{\cH^\vee}:\OcY\otimes_{\phi,\OcY}\Lie(\cH^\vee)\to \Lie(\cH^\vee)
\]
is killed by $v^{s}$. The category of finite flat group schemes over $\OcY$ of $v$-height $\leq s$ is denoted by $\cC_{\OcY}^{\leq s}$. Then we have an anti-equivalence of categories
\[
\cH_\cY(-): \Mod^{r,\phi}_{/\OcY} \to \cC_{\OcY}^{\leq ee'r}
\]
(\cite[Th\'eor\`eme 7.4]{SGA3-7A}). The group scheme $\cH_\cY(\SGn)$ is defined as a functor over $\OcY$ by 
\[
\mathfrak{A} \mapsto \Hom_{\OcY,\phi}(\SGn,\mathfrak{A}),
\]
where we consider an $\OcY$-algebra $\mathfrak{A}$ as a $\phi$-module over $\OcY$ with the absolute Frobenius endomorphism of $\mathfrak{A}$. If we choose a basis $e_1,\ldots,e_d$ of $\SGn$ and take a matrix $A=(a_{i,j})\in M_d(\OcY)$ satisfying
\[
\phi(e_1,\ldots,e_d)=(e_1,\ldots,e_d)A,
\]
then $\cH_\cY(\SGn)$ is isomorphic to the additive group scheme over $\OcY$ defined by the system of equations 
\[
X_i^p-\sum_{j=1}^{d}a_{j,i}X_j=0\ \ (i=1,\ldots,d).
\]
For an object $\SGm$ of $\ModSGr$, we also have a natural isomorphism
\[
\OcY\times_{\OcX} \cH_\cX(\SGm) \to \cH_\cY(\OcY\otimes_{\OcX}\SGm)
\]
of finite flat group schemes over $\OcY$. We drop the subscript $\cY$ of $\cH_\cY$ if there is no risk of confusion.

The following lemma is a variant of the scheme-theoretic closure for finite flat group schemes. 
\begin{lem}\label{closure}
Let $\SGm$ be an object of $\ModSGr$ and $L$ be a $G_\cX$-stable subgroup of $\TSG(\SGm)$. Then there exists a surjection $\SGm \to \SGm''$ of $\ModSGr$ such that the image of the corresponding injection $\TSG(\SGm'')\to\TSG(\SGm)$ coincides with $L$. A surjection $\SGm\to\SGm''$ satisfying this property is unique up to a unique isomorphism.
\end{lem}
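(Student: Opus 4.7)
The plan is to exploit the anti-equivalence $\cH_\cX : \ModSGr \to \cC_{\OcX}^{\leq er}$ and reduce the problem to the usual scheme-theoretic closure construction for finite flat group schemes. Set $\cH = \cH_\cX(\SGm)$, so that $\TSG(\SGm)$ is identified with $\cH(\OcXsep)$ as $G_\cX$-modules. From the explicit presentation $\cH = \Spec \OcX[X_1,\ldots,X_d]/(X_i^p - \sum_j a_{j,i}X_j)$ and the $E$-height hypothesis on $\SGm$, the determinant of the matrix $A = (a_{i,j})$ is a power of $u$ up to a unit, so the generic fiber $\cH \times_{\OcX} \cX$ is étale. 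Galois descent then identifies the $G_\cX$-stable subgroup $L \subseteq \cH(\cXsep)$ with a closed subgroup scheme $\cH'_\cX$ of the generic fiber, and I would let $\cH'$ be its scheme-theoretic closure in $\cH$. Standard arguments show that $\cH'$ is a finite flat closed subgroup scheme of $\cH$ over $\OcX$ with $\cH'(\OcXsep) = L$.

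The key verification is that $\cH'$ lies in $\cC_{\OcX}^{\leq er}$. The vanishing $V_{\cH'} = 0$ is immediate from the functoriality of Verschiebung applied to the closed immersion $\cH' \hookrightarrow \cH$ together with $V_\cH = 0$. For the $v$-height, I would dualize: the closed immersion $\cH' \hookrightarrow \cH$ yields a surjection $\cH^\vee \twoheadrightarrow (\cH')^\vee$ of Cartier duals, hence a surjection $\Lie(\cH^\vee) \twoheadrightarrow \Lie((\cH')^\vee)$ compatible with the maps denoted $V_{(-)^\vee}$ in the paper; consequently $\Coker(V_{(\cH')^\vee})$ is a quotient of $\Coker(V_{\cH^\vee})$, which is killed by $u^{er}$ by hypothesis. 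I expect this $v$-height check to be the main, though essentially formal, technical point.

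Applying the quasi-inverse of $\cH_\cX$ to the closed immersion $\cH' \hookrightarrow \cH$ produces a surjection $\SGm \to \SGm''$ in $\ModSGr$ with $\cH_\cX(\SGm'') \simeq \cH'$, and therefore $\TSG(\SGm'') \simeq \cH'(\OcXsep) = L$ inside $\TSG(\SGm)$. For uniqueness, any other such surjection $\SGm \to \SGm''_1$ corresponds under $\cH_\cX$ to a closed subgroup scheme of $\cH$ sharing the same $\OcXsep$-points with $\cH'$; since both subgroup schemes are finite flat over $\OcX$ with étale generic fibers, they are determined by their generic fibers, and Galois descent forces those generic fibers to coincide with $\cH'_\cX$. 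Hence the two subgroup schemes agree, and the anti-equivalence produces an isomorphism $\SGm'' \simeq \SGm''_1$, which is unique because the surjection $\SGm \to \SGm''$ is an epimorphism in $\ModSGr$.
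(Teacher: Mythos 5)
Your argument is essentially correct, but it takes a genuinely different route from the paper. You transport the problem to the group-scheme side via the anti-equivalence $\cH_\cX(-)$, form the scheme-theoretic closure of the generic subgroup corresponding to $L$, check that the closure stays in $\cC_{\OcX}^{\leq er}$, and come back. The paper instead stays entirely on the module side: it passes to the \'etale $\phi$-module $M=\cX\otimes_{\SG_1}\SGm$, uses Fontaine's equivalence between \'etale $\phi$-modules and $G_\cX$-modules to find the quotient $M\to M''$ corresponding to $L$, and defines $\SGm''$ as the image of $\SGm$ in $M''$; the fact that this image is again an object of $\ModSGr$ with the right associated representation is delegated to \cite[Lemma~2.3.6]{Li_FC}. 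The two constructions are parallel (the \'etale $\phi$-module is the ``generic fibre'' of the Kisin module, and taking the image of $\SGm$ in $M''$ is the linear-algebra shadow of scheme-theoretic closure), but the paper's version is shorter because the height bound on the image is already packaged in Liu's lemma, whereas you must re-derive it through the SGA~3 dictionary. What your route buys is a concrete geometric picture that meshes directly with how the lemma is used later (closed subgroup schemes of $\cH(\SGm)$ and compatibility of lower ramification with subgroups).

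One step deserves more care than your ``essentially formal'' label suggests: a faithfully flat homomorphism of finite flat group schemes does \emph{not} in general induce a surjection on Lie algebras (e.g.\ $\alpha_{p^2}\to\alpha_{p^2}/\alpha_p$ kills the tangent space). Your surjection $\Lie(\cH^\vee)\to\Lie((\cH')^\vee)$ is nevertheless valid here because $\cH^\vee$ and $(\cH')^\vee$ are killed by Frobenius (dually to $V_{\cH}=V_{\cH'}=0$), and on height-one group schemes the Lie functor is exact by the $p$-Lie algebra equivalence of \cite{SGA3-7A}; equivalently, you can argue that the anti-equivalence turns the closed immersion $\cH'\hookrightarrow\cH$ into a surjection of $\phi$-modules and then apply right-exactness of the cokernel of $1\otimes\phi$. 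With that justification supplied, the height bound, the existence of $\SGm''$, and the uniqueness argument via epimorphy of $\SGm\to\SGm''$ all go through.
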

\begin{proof}
This follows from \cite[Lemma 2.3.6]{Li_FC}. Indeed, let $\Mod^{\phi}_{/\cX}$ denote the category of etale $\phi$-modules over $\cX$ (\cite[Section A1]{F_PhiGamma}). We have an equivalence of categories $T_{*}$ from $\Mod^{\phi}_{/\cX}$ to the category of finite $G_\cX$-modules over $\bF_p$ defined by $T_{*}(M)=(\cXsep\otimes_{\cX}M)^{\phi=1}$. For an object $M$ of $\Mod^{\phi}_{/\cX}$, we also put $T^{*}(M)=\Hom_{\cX,\phi}(M,\cXsep)$. Then the natural map
\[
T_{*}(M)\to \Hom_{\bF_p}(T^{*}(M),\bF_p)
\]
is an isomorphism of $G_\cX$-modules. Set $M=\cX\otimes_{\SG_1}\SGm$. We have a natural isomorphism of $G_\cX$-modules $\TSG(\SGm)\to T^{*}(M)$ and let $M''$ be the quotient of $M$ corresponding to the surjection 
\[
T_{*}(M)\to \Hom_{\bF_p}(T^{*}(M),\bF_p)\to \Hom_{\bF_p}(L,\bF_p).
\]
Then the Kisin module $\SGm''=\Img(\SGm\to M\to M'')$ satisfies the desired property.
\end{proof}

Since the finite flat group scheme $\cH(\SGm)$ is generically etale, the group $\cH(\SGm)(\OcXsep)$ can be identified with the group $\cH(\SGm)(R)$ and we have the $j$-th upper ramification subgroup $\cH(\SGm)^j(\OcXsep)=\cH(\SGm)^j(R)$ of $\cH(\SGm)$. We also have the $i$-th lower ramification subgroup
\[
\cH(\SGm)_i(\OcXsep)=\Ker(\cH(\SGm)(\OcXsep)\to \cH(\SGm)(\OcXsep/m_{\cXsep}^{\geq i})),
\]
which we identify with
\[
\cH(\SGm)_i(R)=\Ker(\cH(\SGm)(R)\to \cH(\SGm)(R/m_{R}^{\geq i}))
\]
by using the injection $\OcXsep/m_{\cXsep}^{\geq i}\to R/m_R^{\geq i}$.
Since $\cH(\SGm)(R)=\TSG(\SGm)$, the pairing $\langle\ ,\ \rangle_\SGm$ of Subsection \ref{DuGal} induces a perfect pairing
\[
\langle\ ,\ \rangle_\SGm: \cH(\SGm)(R)\times \cH(\SGm^\vee)(R)\to R.
\]
Then the main theorem of this section is the following.

\begin{thm}\label{ramdualp}
Let $\SGm$ be an object of $\ModSGr$. Then we have $\cH(\SGm)^j=0$ for $j> per/(p-1)$. Moreover, for $j\leq per/(p-1)$,
we have the equality
\[
\cH(\SGm)^j(R)^{\bot}=\cH(\SGm^\vee)_{l_r(j)+}(R)
\]
of subgroups of $\cH(\SGm^\vee)(R)$, where $\bot$ means the orthogonal subgroup with respect to the pairing $\langle\ ,\ \rangle_\SGm$ and $l_r(j)=er/(p-1)-j/p$.
\end{thm}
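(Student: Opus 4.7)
The proof follows the strategy of Tian (\cite{Ti}) and Fargues (\cite{Fa}) for the mixed characteristic analogue Theorem \ref{TF}, using the Kisin-module duality of Subsection \ref{DuGal} in place of classical Cartier duality of finite flat group schemes. The first step is to render the pairing $\langle\ ,\ \rangle_\SGm$ explicit. Choose a basis $e_1,\dots,e_d$ of $\SGm$ with $\phi_\SGm(e_1,\dots,e_d)=(e_1,\dots,e_d)A$, so that $\phi_{\SGm^\vee}(e^\vee_1,\dots,e^\vee_d)=(e^\vee_1,\dots,e^\vee_d)(E(u)/c_0)^r\tran A^{-1}$. The evaluation $e_i\otimes e^\vee_j\mapsto\delta_{ij}\mathbf{e}$ defines a $\phi$-equivariant map $\SGm\otimes_{\SG_1}\SGm^\vee\to\SG(r)$, thanks to the identity $\tran A\cdot(E(u)/c_0)^r\tran A^{-1}=(E(u)/c_0)^rI$; hence for $f\in\cH(\SGm)(R)$ and $g\in\cH(\SGm^\vee)(R)$ with coordinates $x_i=f(e_i)$ and $y_i=g(e^\vee_i)$, the pairing is $\langle f,g\rangle_\SGm=\sum_{i=1}^d x_iy_i$, viewed in $\bF_p\bar{\frt}^r\subseteq R$. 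The functional equation $\phi(\frt)=(E(u)/c_0)\cdot\frt$, which follows from the formulas for $\lambda,t$ and the relation $E(0)=pc_0$, implies $\phi(\bar{\frt})=u^e\bar c_0^{-1}\bar{\frt}$ modulo $p$, whence $v_R(\bar{\frt}^r)=er/(p-1)$.

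The heart of the argument, and its principal technical obstacle, is the coordinate lower bound
\[
v_R(f(e_i))\geq j/p\quad\text{for every }i\text{ and every }f\in\cH(\SGm)^j(R).
\]
I plan to prove this by analysing the connected component of the identity in the Abbes--Saito tubular neighbourhood $X^j=\{(x_i)\in R^d\mid v_R(x_i^p-\sum_j a_{j,i}x_j)\geq j\}$ of the affine presentation $\cH(\SGm)=\Spec\SG_1[X_1,\dots,X_d]/(X_i^p-\sum_j a_{j,i}X_j)$. The heuristic is that near the origin the $p$-th power term dominates each defining equation, reducing the constraint to $pv_R(x_i)\geq j$; a careful case analysis distinguishing the ranges in which $v_R(x_i^p)$ and $v_R(\sum_j a_{j,i}x_j)$ dominate, aided by the absence of divided-power corrections in the equal characteristic defining equations, should yield the bound for $j$ in the stated range.

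Given this bound, the remaining assertions follow quickly. The $E$-height condition yields a matrix $C\in M_d(\SG_1)$ with $AC=u^{er}I$; applying $f$ to the resulting relation $u^{er}e_k=\sum_\ell c_{\ell,k}\phi(e_\ell)$ gives $u^{er}x_k=\sum_\ell c_{\ell,k}x_\ell^p$, which forces $\min_k v_R(x_k)\leq er/(p-1)$ for every nonzero $f\in\cH(\SGm)(R)$. Combined with the lower bound, this implies $\cH(\SGm)^j(R)=0$ whenever $j>per/(p-1)$, proving the first assertion. For the duality, if $g\in\cH(\SGm^\vee)_{l_r(j)+}(R)$ and $f\in\cH(\SGm)^j(R)$, the bounds $v_R(x_i)\geq j/p$ and $v_R(y_i)>er/(p-1)-j/p$ give $v_R(\sum_i x_iy_i)>er/(p-1)=v_R(\bar{\frt}^r)$, and intersecting with $\bF_p\bar{\frt}^r$ forces $\langle f,g\rangle_\SGm=0$; hence $\cH(\SGm^\vee)_{l_r(j)+}(R)\subseteq\cH(\SGm)^j(R)^{\bot}$. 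The reverse inclusion is obtained by applying the same argument symmetrically to the pair $(\SGm^\vee,\SGm)$ and comparing cardinalities via the perfect pairing established in the first paragraph.
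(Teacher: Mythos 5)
Your setup (the explicit coordinate form of $\langle\ ,\ \rangle_\SGm$, the relation $\phi(\frt)=(E(u)/c_0)\frt$ and the resulting valuation $v_R(\bar{\frt}^r)=er/(p-1)$, and the height argument giving $\min_k v_R(x_k)\leq er/(p-1)$ for nonzero points) is correct and consistent with what the paper uses implicitly. However, the argument has a genuine gap at exactly the point you flag as its heart: the claim that every $f\in\cH(\SGm)^j(R)$ satisfies $v_R(f(e_i))\geq j/p$ for all $i$ is never proved. What you offer is a heuristic about the $p$-th power term dominating near the origin; but membership in $\cH(\SGm)^j(R)$ means that the point lies in the same geometric connected component of the tubular neighbourhood $X^j$ as the zero section, and to extract a valuation bound from that you would have to show that the locus $\{x\in X^j\mid v_R(x_i)\geq j/p\ \text{for all}\ i\}$ is open and closed in $X^j$ (or exhibit some other topological separation). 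This is a nontrivial Newton-polygon/connectedness analysis in arbitrary rank $d$, and it is precisely the kind of computation that Tian's original mixed-characteristic proof required and that Fargues' proof was designed to avoid. A second, independent gap is the reverse inclusion $\cH(\SGm)^j(R)^{\bot}\subseteq\cH(\SGm^\vee)_{l_r(j)+}(R)$: applying your argument to $(\SGm^\vee,\SGm)$ yields $\cH(\SGm)_{l_r(j')+}(R)\subseteq\cH(\SGm^\vee)^{j'}(R)^{\bot}$, which is a different containment, and the cardinality count you would need, namely $|\cH(\SGm)^j(R)|\cdot|\cH(\SGm^\vee)_{l_r(j)+}(R)|=p^d$, is equivalent to the theorem itself; as written this step is circular.

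The paper circumvents both difficulties by following Fargues' strategy more closely. After a finite separable base change trivializing the $G_\cX$-action on $\cH(\SGm^\vee)(R)$, it attaches to each $x^\vee\in\cH(\SGm^\vee)(R)$ the rank-one quotient $\SGm^\vee\to\SGn$ corresponding to $\bF_p x^\vee$ via the Kisin-module analogue of scheme-theoretic closure (Lemma \ref{closure}); the compatibility of upper ramification with quotients and of lower ramification with subgroups then reduces everything to the rank-one case, where the defining equations are $X^p-u^saX=0$ and $X^p-u^{er-s}a'X=0$ and both filtrations can be computed outright (Lemma \ref{onedim}). If you want to salvage your approach, the cleanest fix is to import this reduction: your coordinate bound and the reverse inclusion both become one-line verifications once $\SGm$ has rank one.
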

\begin{proof}
We proceed as in the proof of \cite[Proposition 6]{Fa}. Let $\cY$ be a finite separable extension of $\cX$ in $\cXsep$ of relative ramification index $e'$ and put $\SGm_{\cY}=\OcY\otimes_{\SG_1}\SGm$. Then we have a commutative diagram 
\[
\xymatrix{
\cH(\SGm)(R)\times\cH(\SGm^\vee)(R)\ar@<-6ex>[d]_{\wr}\ar@<+6ex>[d]_{\wr} \ar[r]^-{\langle\ ,\ \rangle_{\SGm}} & R \ar@{=}[d]\\
\cH_{\cY}(\SGm_{\cY})(R)\times \cH_{\cY}(\SGm_{\cY}^\vee)(R)\ar[r]_-{\langle\ ,\ \rangle_{\SGm_{\cY}}} & R,
}
\]
where $\langle\ ,\ \rangle_{\SGm_{\cY}}$ is a perfect pairing defined similarly to the pairing $\langle\ ,\ \rangle_{\SGm}$ and the vertical arrows are isomorphisms. Since $\cH(\SGm^\vee)$ is generically etale, after making a finite separable base change and replacing $e$ by $ee'$, we may assume that the $G_\cX$-action on $\cH(\SGm^\vee)(R)$ is trivial.

Let $x^\vee$ be an element of $\cH(\SGm^\vee)(R)$ and consider the surjection $\SGm^\vee\to \SGn$ of the category $\ModSGr$ corresponding to the subspace $\bF_p x^\vee\subseteq \cH(\SGm^\vee)(R)$ by Lemma \ref{closure}. Then we have the commutative diagram
\[
\xymatrix{
\cH(\SGm)(R)\times\cH(\SGm^\vee)(R)\ar@<-6ex>[d]\ar[r]^-{\langle\ ,\ \rangle_{\SGm}} & R \ar@{=}[d]\\
\cH(\SGn^\vee)(R)\times \cH(\SGn)(R)\ar@<-6ex>[u]\ar[r]_-{\langle\ ,\ \rangle_{\SGn}} & R.
}
\]
Thus, by the compatibility with quotients ({\it resp.} subgroups) of the upper ({\it resp.} lower) ramification subgroups, the theorem follows from Lemma \ref{onedim} below.
\end{proof}

\begin{lem}\label{onedim}
Let $\SGn$ be an object of $\ModSGr$ which is free of rank one over $\SG_1$. Then we have $\cH(\SGn^\vee)^j(R)=0$ if $j>per/(p-1)$ and $\cH(\SGn)_i(R)=0$ if $i>er/(p-1)$. For $j\leq per/(p-1)$, the subgroup $\cH(\SGn^\vee)^j(R)$ is zero if and only if $\cH(\SGn)_{l_r(j)+}(R)=\cH(\SGn)(R)$.
\end{lem}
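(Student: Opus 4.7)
The plan is to reduce everything to an explicit rank-one calculation. Write $\SGn = \SG_1 e$ with $\phi(e) = a e$; the $E$-height hypothesis forces $a = u^s \alpha$ for some $\alpha \in \SG_1^\times$ and $0 \le s \le er$. The duality formula recalled in Section \ref{Cart} then gives $\SGn^\vee = \SG_1 e^\vee$ with $\phi(e^\vee) = b\, e^\vee$ where $b = (E(u)/c_0)^r a^{-1}$, and since $E(u) \equiv u^e \pmod p$ in $\SG$ while $c_0 \in W^\times$, this reads $b = u^{s'} \beta$ in $\SG_1$ with $s' := er - s$ and $\beta \in \SG_1^\times$. Using the explicit description of $\cH(-)$ recalled above the statement, I obtain
\[
\cH(\SGn) = \Spec(\OcX[X]/(X^p - aX)), \qquad \cH(\SGn^\vee) = \Spec(\OcX[Y]/(Y^p - bY)),
\]
whose $R$-points consist of $0$ together with the $(p-1)$-st roots of $a$, resp.\ $b$. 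Since $v_R(u) = 1$, the nonzero points have $v_R$-valuation $s/(p-1)$, resp.\ $s'/(p-1)$.

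The lower ramification bound is then immediate from the definition: $\cH(\SGn)_i(R)$ is the set of $R$-points with $v_R \ge i$, hence it equals $\cH(\SGn)(R)$ for $i \le s/(p-1)$ and vanishes for $i > s/(p-1)$. In particular $\cH(\SGn)_i(R) = 0$ for $i > er/(p-1)$ because $s \le er$.

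The heart of the argument is the computation of the Abbes-Saito tubular neighborhood
\[
\{y \in R : v_R(y) \ge 0,\ v_R(y^p - by) \ge j\}
\]
of $B = \OcX[Y]/(Y^p - bY)$. Using the factorization $y^p - by = y(y^{p-1} - b)$ and case analysis on $v_R(y)$ relative to the critical value $s'/(p-1)$, I expect to show that when $j \le ps'/(p-1)$ the tubular neighborhood is the single closed disk $\{v_R(y) \ge j/p\}$ (which already contains all $p$ roots because $j/p \le s'/(p-1)$), whereas when $j > ps'/(p-1)$ it decomposes into the disjoint union of $p$ closed disks of radius $j - s'$ centered at the $p$ roots. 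The disjointness uses that any two of the $p$ roots lie at $v_R$-distance exactly $s'/(p-1)$, which is strictly less than $j - s'$ precisely when $j > ps'/(p-1)$. Passing to geometric connected components, this gives $\cH(\SGn^\vee)^j(R) = \cH(\SGn^\vee)(R)$ in the first case and $\cH(\SGn^\vee)^j(R) = 0$ in the second, so in particular $\cH(\SGn^\vee)^j(R) = 0$ whenever $j > per/(p-1)$ since $s' \le er$.

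For the final equivalence, the lower ramification description shows that $\cH(\SGn)_{l_r(j)+}(R) = \cH(\SGn)(R)$ iff $l_r(j) = er/(p-1) - j/p < s/(p-1)$, which rearranges to $(p-1)j > p(er - s) = ps'$, that is $j > ps'/(p-1)$. This is exactly the condition characterizing $\cH(\SGn^\vee)^j(R) = 0$. The main obstacle will be the tubular-neighborhood analysis: each individual valuation estimate is elementary, but some care is needed in the borderline regime $v_R(y) = s'/(p-1)$, where one must verify that the $p$ disks separate precisely at $j = ps'/(p-1)$ and no sooner.
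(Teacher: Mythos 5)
Your proposal is correct and follows essentially the same route as the paper: write $\phi(\mathbf{n})=u^{s}a\mathbf{n}$, deduce $\phi(\mathbf{n}^\vee)=u^{er-s}a'\mathbf{n}^\vee$ from the duality formula, reduce to the explicit equations $X^p-u^{s}aX$ and $X^p-u^{er-s}a'X$, and read off the lower and upper ramification subgroups before chaining the inequalities together. The only difference is that the paper delegates the tubular-neighborhood computation to \cite[Section 3]{Ha1}, whereas you carry out the disk decomposition (single disk of radius $j/p$ versus $p$ disjoint disks of radius $j-s'$, separating exactly at $j=ps'/(p-1)$) directly; that analysis matches the cited calculation.
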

\begin{proof}
Let $\mathbf{n}$ be a basis of $\SGn$ and $\mathbf{n}^\vee$ be its dual basis of $\SGn^\vee$. Put $\phi_{\SGn}(\mathbf{n})=u^{s}a\mathbf{n}$ with $0\leq s\leq er$ and $a\in\SG_1^\times$. Then we have $\phi_{\SGn^\vee}(\mathbf{n}^\vee)=u^{er-s}a'\mathbf{n}^\vee$ with some $a'\in\SG_1^\times$. Hence the defining equations of $\cH(\SGn)$ and $\cH(\SGn^\vee)$ are $X^p-u^saX=0$ and $X^p-u^{er-s}a'X=0$, respectively. By a calculation as in \cite[Section 3]{Ha1}, we see that
\begin{align*}
\cH(\SGn^\vee)^j(R)&=\left\{
\begin{array}{ll}
\cH(\SGn^\vee)(R) & (j\leq p(er-s)/(p-1))\\
0 & (j>p(er-s)/(p-1)),
\end{array}\right.\\
\cH(\SGn)_i(R)&=\left\{
\begin{array}{ll}
\cH(\SGn)(R) & (i\leq s/(p-1))\\
0 & (i>s/(p-1))
\end{array}\right.
\end{align*}
and the first assertion follows. Moreover, for $j\leq per/(p-1)$, we have $l_r(j)\geq 0$ and
\begin{align*}
\cH(\SGn^\vee)^j(R)=0 &\Leftrightarrow j>p(er-s)/(p-1)\\
&\Leftrightarrow l_r(j)<s/(p-1) \Leftrightarrow \cH(\SGn)_{l_r(j)+}(R)=\cH(\SGn)(R).
\end{align*}
\end{proof}

By the previous lemma, we also have the following corollary.

\begin{cor}\label{lowramboundp}
Let $\SGm$ be an object of $\ModSGr$. Then the $i$-th lower ramification subgroup $\cH(\SGm)_i$ vanishes for $i>er/(p-1)$.
\end{cor}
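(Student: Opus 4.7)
My plan is to argue by contradiction and reduce to the rank-one case of Lemma~\ref{onedim} via a Galois-trivializing base change and the scheme-theoretic closure construction of Lemma~\ref{closure}. Suppose $x \in \cH(\SGm)_i(R)$ is nonzero for some $i > er/(p-1)$. Since $\cH(\SGm)$ is generically \'etale, I first choose a finite separable extension $\cY/\cX$ in $\cXsep$, of relative ramification index $e'$, such that $G_\cY$ acts trivially on $\cH(\SGm)(R)$. Put $\SGm_\cY = \OcY \otimes_{\SG_1} \SGm \in \Mod^{r,\phi}_{/\OcY}$. Under the natural isomorphism $\OcY \times_{\OcX} \cH(\SGm) \to \cH_\cY(\SGm_\cY)$ and the compatibility of the lower ramification filtration with finite base change recalled in Subsection~3.1, the point $x$ yields a nonzero element of $\cH_\cY(\SGm_\cY)_{ie'}(R)$, where $ie' > ee'r/(p-1)$.

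Because $G_\cY$ now acts trivially on $\TSG(\SGm_\cY) = \cH_\cY(\SGm_\cY)(R)$, the line $\bF_p x$ is $G_\cY$-stable. The direct analog of Lemma~\ref{closure} for the category $\Mod^{r,\phi}_{/\OcY}$ therefore produces a surjection $\SGm_\cY \to \SGn$ with $\SGn$ free of rank one over $\OcY$, such that the corresponding closed immersion $\cH_\cY(\SGn) \hookrightarrow \cH_\cY(\SGm_\cY)$ identifies $\cH_\cY(\SGn)(R)$ with $\bF_p x$. By the compatibility of the lower ramification filtration with closed immersions recalled in Subsection~3.1, the intersection $\cH_\cY(\SGn)(R) \cap \cH_\cY(\SGm_\cY)_{ie'}(R)$ equals $\cH_\cY(\SGn)_{ie'}(R)$ and contains $x$. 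But Lemma~\ref{onedim}, applied over $\OcY$ to the rank-one module $\SGn$ with Kisin height bounded by $ee'r$, forces $\cH_\cY(\SGn)_{ie'}(R) = 0$ because $ie' > ee'r/(p-1)$. This contradicts $x \neq 0$ and proves the corollary.

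The main obstacle is essentially bookkeeping rather than any conceptual difficulty: one has to verify that Lemma~\ref{closure} and Lemma~\ref{onedim} transplant cleanly to the category $\Mod^{r,\phi}_{/\OcY}$, and that the scaling factor $e'$ appears consistently on both the Kisin height bound and the lower ramification index when one passes from $\cX$ to $\cY$. Both verifications are essentially literal repetitions of the arguments over $\OcX$, but they are the technical linchpin that makes the reduction from an arbitrary Kisin module to a rank-one one legitimate.
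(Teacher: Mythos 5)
Your argument is correct and is essentially the paper's own proof: the paper likewise reduces to the trivial-Galois-action case by the same finite separable base change (invoked via ``as in the proof of Theorem \ref{ramdualp}''), passes to the rank-one quotient corresponding to $\bF_p x$ by Lemma \ref{closure}, and concludes with Lemma \ref{onedim}. You have merely made explicit the bookkeeping of the factor $e'$ that the paper leaves implicit.
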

\begin{proof}
As in the proof of Theorem \ref{ramdualp}, we may assume that the $G_\cX$-module $\cH(\SGm)(R)$ is trivial. For $i>er/(p-1)$, take an element $x\in \cH(\SGm)_i(R)$ and consider the quotient $\SGm\to \SGn$ corresponding to the subspace $\bF_px\subseteq \cH(\SGm)(R)$. By Lemma \ref{onedim}, we have $\cH(\SGn)_i(R)=0$ and thus $x=0$.
\end{proof}

\begin{rmk}\label{lowrambound}
Let $K$ be a complete discrete valuation field of mixed characteristic $(0,p)$ and $\cG$ be a finite flat group scheme over $\okey$ killed by $p$. Then, by using the usual scheme-theoretic closure of finite group schemes, we can easily see that the $i$-th lower ramification subgroup $\cG_i$ vanishes for $i>e/(p-1)$, just as in the proof of Corollary \ref{lowramboundp}.
\end{rmk}




\section{Comparison of ramification}\label{Compar}

Let $p>2$ be a rational prime and $K$ be a complete discrete valuation field of mixed characteristic $(0,p)$ with perfect residue field $k$, as in Section \ref{Cart}. Then it is known that there exists an anti-equivalence $\cG(-)$ from the category $\ModSGf$ to the category of finite flat group schemes over $\okey$ killed by $p$. On the other hand, we also have the anti-equivalence $\cH(-):\ModSGf\to \cC_{\OcX}^{\leq e}$ defined in Section \ref{ULdual} and an isomorphism of $G_{K_\infty}$-modules
\[
\varepsilon_\SGm: \cG(\SGm)(\okbar)|_{G_{K_\infty}}\to \cH(\SGm)(R).
\]
In this section, we prove that this isomorphism is compatible with the upper and the lower ramification subgroups of both sides. For the proof, after recalling the definitions of the anti-equivalence $\cG(-)$ and the isomorphism $\varepsilon_\SGm$ (\cite{Br}, \cite{Ki_Fcrys}, \cite{Ki_BT}), we show that these are compatible with the base changes inside $K_\infty/K$ and dualities on both sides. Then, by the duality theorems presented in Section \ref{ULdual}, we reduce ourselves to comparing the lower ramification subgroups of both sides, which is achieved by constructing an isomorphism as pointed schemes between reductions of $\cG(\SGm)$ and $\cH(\SGm)$.


\subsection{Breuil-Kisin classification}\label{RecallModGr}

In this subsection, we briefly recall the classification of finite flat group schemes and $p$-divisible groups over $\okey$ due to Breuil and Kisin (\cite{Br_nonpub}, \cite{Br}, \cite{Br_AZ}, \cite{Ki_Fcrys}, \cite{Ki_BT}) and its properties. Their classification theorem is as follows.

\begin{thm}\label{BKclass}
\begin{enumerate}
\item There exists an anti-equivalence of categories $\cG(-)$ from $\ModSGffr$ to the category of $p$-divisible groups over $\okey$ with a natural isomorphism of $G_{K_\infty}$-modules
\[
\varepsilon_\SGm: T_p\cG(\SGm)|_{G_{K_\infty}}\to \TSG(\SGm),
\]
where $T_p\cG(\SGm)=\varprojlim_n \cG(\SGm)[p^n](\okbar)$ is the $p$-adic Tate module of the $p$-divisible group $\cG(\SGm)$.
\item There exists an anti-equivalence of categories $\cG(-)$ from $\ModSGfinf$ to the category of finite flat group schemes over $\okey$ killed by some $p$-power with a natural isomorphism of $G_{K_\infty}$-modules
\[
\varepsilon_\SGm: \cG(\SGm)(\okbar)|_{G_{K_\infty}} \to \TSG(\SGm).
\]
\item Let $\SGm$ be an object of the category $\ModSGfinf$ and take a resolution of Kisin modules
\[
0\to \SGm_1 \overset{f}{\to} \SGm_2\to \SGm \to 0,
\]
where $\SGm_i$ is an object of the category $\ModSGffr$. Put $\cG=\cG(\SGm)$ and $\Gamma_i=\cG(\SGm_i)$. Then we have an exact sequence of fppf sheaves
\[
0\to \cG \to \Gamma_2\overset{\cG(f)}{\to} \Gamma_1 \to 0
\]
which induces the commutative diagram with exact rows
\[
\xymatrix{
0 \ar[r] & T_p\Gamma_2 \ar[r]\ar[d]_{\varepsilon_{\SGm_2}} & T_p\Gamma_1 \ar[r]\ar[d]_{\varepsilon_{\SGm_1}} & \cG(\okbar) \ar[r]\ar[d]_{\varepsilon_{\SGm}} & 0\\
0 \ar[r] &  \TSG(\SGm_2) \ar[r] &  \TSG(\SGm_1) \ar[r] &  \TSG(\SGm) \ar[r]&  0.
}
\]
\end{enumerate}
\end{thm}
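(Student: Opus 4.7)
The statement is a recapitulation of results of Kisin (with Galois-compatibility going back in part to Breuil), so the plan is to cite the constructions in \cite{Ki_Fcrys, Ki_BT} for (1) and then derive (2) and (3) from (1) by the standard device of resolving torsion Kisin modules by finite free ones.

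For part (1), I would invoke Kisin's theorems directly. One construction passes through Breuil modules: compose the equivalence $\cM_\SG : \ModSGffr \to \ModSrfr$ recalled in Section \ref{Cart} with Breuil's anti-equivalence between $\ModSrfr$ and $p$-divisible groups over $\okey$ (valid because $p>2$). The Galois action on the $p$-adic Tate module is then identified with the action on $\Hom_{S,\Fil^r,\phi_r}(\cM_\SG(\SGm),\Acrys)$, which by the comparison map $f\mapsto(s\otimes m\mapsto s\phi(f(m)))$ of Subsection \ref{DuGal} agrees with $\TSG(\SGm)=\Hom_{\SG,\phi}(\SGm,\SGur)$ after restriction to $G_{K_\infty}$. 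This yields both the anti-equivalence and the natural isomorphism $\varepsilon_\SGm$.

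For part (2), the plan is as follows. Given an object $\SGm$ of $\ModSGfinf$, its underlying $\SG$-module is finitely generated, $p$-power torsion and $u$-torsion free; standard arguments (e.g.\ lifting a basis of $\SGm/u\SGm$) produce a resolution $0\to \SGm_1\xrightarrow{f}\SGm_2\to\SGm\to 0$ in which $\SGm_1,\SGm_2\in\ModSGffr$. Applying (1) to the injection $f$ gives a faithfully flat morphism of $p$-divisible groups $\cG(f):\Gamma_2\to\Gamma_1$, whose kernel is a finite flat group scheme killed by some $p$-power; define $\cG(\SGm)$ to be this kernel. Independence of the chosen resolution and functoriality follow from a standard double-complex/horseshoe argument: any morphism $\SGm\to\SGm'$ lifts to a morphism of resolutions, uniquely up to a nullhomotopy that disappears on the kernel, and morphisms between resolutions of the same module differ by a homotopy. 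Essential surjectivity and fully faithfulness of the resulting functor on $\ModSGfinf$ are then inherited from (1). The Galois isomorphism $\varepsilon_\SGm$ for the torsion module is defined by snake-lemma applied to the corresponding short exact sequence $0\to T_p\Gamma_2\to T_p\Gamma_1\to \cG(\SGm)(\okbar)\to 0$ versus the exact sequence $0\to\TSG(\SGm_2)\to\TSG(\SGm_1)\to\TSG(\SGm)\to 0$ obtained from exactness of $\TSG$ on $\ModSGrinf$.

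Part (3) is then a tautology from the construction in (2): by definition $\cG=\Ker(\cG(f))$ in the category of fppf sheaves, so the stated exact sequence and the commutative diagram with $\varepsilon$'s are built into the definition once one checks compatibility of $\varepsilon$ on free objects, which is part (1). The main obstacle in this program is the well-definedness in (2): namely, verifying that the Galois action produced on $\cG(\SGm)(\okbar)$ via different resolutions agrees, equivalently that $\varepsilon$ is functorial for morphisms of resolutions. This is a diagram-chase using exactness of $\TSG$ and the functoriality of $\varepsilon$ on $\ModSGffr$, but it is the only non-formal point; once it is in place, everything else is bookkeeping.
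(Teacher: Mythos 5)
Your proposal is correct and follows essentially the same route as the paper, which likewise treats this statement as a recollection: the functor is defined on $\ModSGffr$ by citing Kisin (either as $\Gr^{\mathrm{B}}\circ\cM_\SG$ or via the Dieudonn\'e crystal evaluated on $S\to\okey$), extended to $\ModSGfinf$ by the same resolution device, with independence of the chosen resolution delegated to \cite[Lemma 2.3.4]{Ki_Fcrys} and part (3) to \cite[Proposition 2.1.3 and Theorem 2.3.4]{CL}. The one step you understate is essential surjectivity on the torsion category, which is not formally inherited from (1) but needs Raynaud's theorem that every finite flat group scheme over $\okey$ killed by a $p$-power is the kernel of an isogeny of $p$-divisible groups; the paper, however, also leaves this to the cited literature.
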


We have two definitions of the functor $\cG(-)$ and the isomorphism $\varepsilon_\SGm$. One is putting $\cG(-)=\Gr^\mathrm{B}(\cM_\SG(-))$ with the anti-equivalence $\Gr^\mathrm{B}(-)$ of \cite{Br}, as in \cite{Ki_BT}. The isomorphism $\varepsilon_\SGm$ is constructed in the proof of \cite[Proposition 1.1.13]{Ki_BT}. Then we can prove Theorem \ref{BKclass} (3) by using \cite[Proposition 2.1.3 and Theorem 2.3.4]{CL}. The other is given in \cite{Ki_Fcrys}, as follows. For a $p$-divisible group $\Gamma$ over $\okey$, set $\Mod^\mathrm{K}(\Gamma)$ to be the section $\bD^{*}(\Gamma)_{(S\to \okey)}$ of the contravariant Dieudonn\'{e} crystal $\bD^{*}(\Gamma)$ (\cite{BBM}) on the divided power thickening $S\to \okey$ defined by $u\mapsto \pi$. The $S$-module $\Mod^\mathrm{K}(\Gamma)$ is endowed with the natural Frobenius and the Hodge filtration. Then it is shown that this defines an anti-equivalence from the category of $p$-divisible groups over $\okey$ to the category $\ModSffr$ with a quasi-inverse $\Gr^\mathrm{K}$ (\cite[Proposition A.6]{Ki_Fcrys}). Put $\cG(-)=\Gr^\mathrm{K}(\cM_\SG(-))$ and define $\cG(-)$ for the category $\ModSGfinf$ by taking a resolution as in Theorem \ref{BKclass} (3). For an object $\SGm$ of the category $\ModSGffr$, put $\cM=\cM_\SG(\SGm)$ and $\Gamma=\cG(\SGm)$. Consider the divided power thickening $\Acrys\to \oc$ induced by the map $\theta:W(R)\to \oc$ and we let $\cF_{\Acrys}$ denote the section of a crystalline sheaf $\cF$ on this thickening. Then we define the isomorphism $\varepsilon_\SGm$ in this case to be the composite of the isomorphism $\TSG(\SGm)\to \Tcrys(\cM)$ and the natural isomorphism $\varepsilon_{\cM}: T_p\Gamma\to \Tcrys(\cM)$ induced by the isomorphism
\[
T_p\Gamma \to \Tcrys(\bD^{*}(\Gamma)_{(S\to \okey)})=\Hom_{S,\Fil^1,\phi_1}(\bD^{*}(\Gamma)_{\Acrys}, \bD^{*}(\bQ_p/\bZ_p)_{\Acrys})
\]
sending a homomorphism $g:\bQ_p/\bZ_p\to \Gamma$ of $p$-divisible groups over $\oc$ to $\bD^{*}(g)$ (\cite[Theorem 7]{Fal}). We can see that these two definitions are naturally isomorphic to each other.

We need an explicit construction of the isomorphism $\varepsilon_\SGm$ for an object $\SGm$ of $\ModSGf$ (\cite[Proposition 1.1.13]{Ki_BT}). Let $H_1$ be the divided power polynomial ring $R^\PD\langle u-\upi\rangle$ over $R^\PD$. Consider the $n$-th projection $\prjt_n: R\to \tokbar$. The map $\prjt_0$ induces a surjection $H_1=R^\PD\langle u-\upi\rangle \to \tokbar$ defined by $u\mapsto \pi$, which is a divided power thickening of $\tokbar$. The map $H_1\to R^\PD$ defined by $u\mapsto \upi$ induces an isomorphism of $G_{K_\infty}$-modules
\[
\cG(\SGm)(\okbar)=\Hom_{S,\Fil^1,\phi_1}(\cM,H_1)\to \Hom_{S,\Fil^1,\phi_1}(\cM,R^\PD)
\]
(see the proof of \cite[Lemme 5.3.1]{Br}) and the isomorphism $\varepsilon_\SGm$ is the composite of this map and the natural isomorphism $\TSG(\SGm)\to\Tcrys(\cM)$.

Let $\SGm$ be an object of $\ModSGf$ and put $\cM=\cM_\SG(\SGm)$. In \cite[Section 3]{Br}, Breuil gave an explicit description of the affine algebra $R_\cM$ of the finite flat group scheme $\cG(\SGm)=\Gr^\mathrm{B}(\cM)$ in terms of $\cM$. Let $e_1,\ldots,e_d$ be an adapted basis of $\cM$ (\cite[D\'{e}finition 2.1.2.6]{Br}) such that 
\begin{align*}
&\Fil^1\cM=u^{r_1}S_1e_1\oplus\cdots\oplus u^{r_d}S_1e_d+(\Fil^pS)\cM,\\
&\phi_1(u^{r_1}e_1,\ldots, u^{r_d}e_d)=(e_1,\ldots, e_d)G
\end{align*}
with a matrix $G\in GL_d(S_1)$ (note that we adopt the transpose of the notation in \cite{Br}). Put $\tilde{S}_1=k[u]/(u^{ep})$ and identify this $k$-algebra with $S_1/\Fil^pS_1$. Assume that the image $\tilde{G}$ of $G$ in $GL_d(\tilde{S}_1)$ is contained in the subgroup $GL_d(k[u^p]/(u^{ep}))$. Consider the ring homomorphisms
\[
k[u]/(u^{ep})\to \cO_{K_1}/p\cO_{K_1} \gets \cO_{K_1},
\]
where the first map is the $\phi^{-1}$-semilinear isomorphism defined by $u \mapsto \pi_1$. We choose a lift $G_\pi=(a_{i,j})$ of $\tilde{G}$ by this map to $GL_d(\okey)$ using the assumption on $G$ and put
\[
R_\cM=\frac{\okey[X_1,\ldots,X_d]}{\left(X_1^p+\frac{\pi^{e-r_1}}{F(\pi)}(\sum_{j=1}^{d}a_{j,1}X_j),\ldots,X_d^p+\frac{\pi^{e-r_d}}{F(\pi)}(\sum_{j=1}^{d}a_{j,d}X_j)\right)}.
\]
Then we have an isomorphism of $p$-adic formal schemes $\Spf(R_\cM)\simeq \cG(\SGm)$. The induced map
\[
\Psi: \Hom_{\okey\text{-alg.}}(R_\cM,\okbar)\to \cG(\SGm)(\okbar)=\Hom_{S,\Fil^1,\phi_1}(\cM,H_1)
\]
is defined as follows. Let $f:R_\cM\to \okbar$ be an element of the left-hand side. Set $x_i=f(X_i)$ and $\bar{x}_i$ to be its image in $\tokbar$. Let $(\tokbar)^\PD$ be the divided power envelope of the ring $\tokbar$ with respect to the ideal $m_{\Kbar}^{\geq e/p}$. Then the map $\prjt_1$ induces an isomorphism $\prjt_1:R^{\PD} \to (\tokbar)^\PD$. We let $y_i$ denote the inverse image of $\bar{x}_i$ by this isomorphism. Then $\Psi(f)$ is the unique element of the right-hand side which is congruent to the $S$-linear map $(e_i\mapsto y_i)$ modulo $\Fil^pH_1$ (see the proof of \cite[Proposition 3.1.5]{Br}). From this description, we see that the zero section of the group scheme $\Spec(R_\cM)$ is defined by $X_1=\cdots=X_d=0$.


\subsection{Compatibility with a base change and Cartier duality}\label{subsecBC}

The functor $\cG(-)$ is compatible with the base changes inside $K_\infty/K$. Moreover, Caruso proved its compatibility with Cartier duality (\cite{Ca_D}). In this subsection, we briefly present a proof of these facts, along with similar compatibilities of the isomorphism $\varepsilon_\SGm$. First we recall the following lemma.

\begin{lem}\label{ffthm}
The functor $\Gamma\mapsto T_p\Gamma|_{G_{K_\infty}}$ from the category of $p$-divisible groups over $\okey$ to the category of $p$-adically continuous $G_{K_\infty}$-representations is fully faithful.
\end{lem}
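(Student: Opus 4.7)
The plan is to reduce the statement to the fully faithfulness of the functor $\TSG$ on the category $\ModSGffr$, which in turn reduces to a theorem of Kisin. Fix $p$-divisible groups $\Gamma_1,\Gamma_2$ over $\okey$ and let $\SGm_i\in\ModSGffr$ be the Kisin modules associated to them by Theorem \ref{BKclass}(1), so that $\Gamma_i\simeq \cG(\SGm_i)$ and we have natural isomorphisms $\varepsilon_{\SGm_i}:T_p\Gamma_i|_{G_{K_\infty}}\to \TSG(\SGm_i)$. By naturality of $\varepsilon$, we obtain a commutative square
\[
\xymatrix{
\Hom(\Gamma_1,\Gamma_2)\ar[r]^-{T_p|_{G_{K_\infty}}}\ar[d]_{\cG(-)^{-1}}^{\wr} & \Hom_{G_{K_\infty}}(T_p\Gamma_1,T_p\Gamma_2)\ar[d]^{\wr}\\
\Hom_{\ModSGffr}(\SGm_2,\SGm_1)\ar[r]^-{\TSG} & \Hom_{G_{K_\infty}}(\TSG(\SGm_1),\TSG(\SGm_2))
}
\]
in which the left vertical arrow is a bijection because $\cG(-)$ is an anti-equivalence. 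Thus the claim is equivalent to showing that the bottom horizontal map is bijective, i.e., that $\TSG:\ModSGffr\to \{G_{K_\infty}\text{-reps}\}^{\mathrm{op}}$ is fully faithful.

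To establish this, I would factor $\TSG$ as the composition of (i) the base change $\SGm\mapsto M(\SGm):=\OEp\otimes_{\SG}\SGm$ from $\ModSGffr$ into the category of etale $\phi$-modules over $\OEp$ that are free of finite rank, followed by (ii) Fontaine's anti-equivalence $M\mapsto \Hom_{\OEp,\phi}(M,\hOEur)$ between the latter category and the category of continuous $\bZ_p$-representations of $G_{K_\infty}$ (\cite{F_PhiGamma}, A1). Since every $\phi$-equivariant $\SG$-homomorphism $\SGm\to \SGur$ extends uniquely to an $\OEp$-linear $\phi$-equivariant map $M(\SGm)\to\hOEur$ (and any such map sends the $\SGm$-lattice into $\SGur$ by $u$-adic continuity together with the fact that $\SGur=\hOEur\cap W(R)$), the composition of (i) and (ii) is canonically isomorphic to $\TSG$. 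Step (ii) is a classical equivalence of categories, so it suffices to show fullness and faithfulness of step (i).

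Faithfulness of (i) is immediate because $\SGm$ is $u$-torsion free, so $\SGm\hookrightarrow M(\SGm)$. Fullness of (i) is the main input: it is the statement that any $\phi$-equivariant $\OEp$-linear morphism $M(\SGm_2)\to M(\SGm_1)$ carries $\SGm_2$ into $\SGm_1$. This is precisely Kisin's theorem that the base change functor is fully faithful on the subcategory of Kisin modules of finite $E$-height (\cite[Proposition 2.1.12]{Ki_Fcrys}); the key point is that a $\phi$-morphism between $\SG$-lattices of bounded $E$-height is controlled by the $E$-height condition, which can be read off after inverting $u$. This is the step where the hypothesis $p>2$ and the $E$-height $\leq 1$ nature of the objects of $\ModSGffr$ corresponding to $p$-divisible groups is used.

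The main obstacle is precisely this fullness in step (i), which is nontrivial and cited from Kisin's work; once granted, faithfulness and the rest are formal. I would therefore present the lemma as a direct consequence of Theorem \ref{BKclass}(1), Kisin's fully faithfulness theorem, and Fontaine's equivalence, with the naturality of $\varepsilon_{\SGm}$ translating the question for $p$-divisible groups into the question for Kisin modules.
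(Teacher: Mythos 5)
Your argument is correct, but it follows a genuinely different route from the paper. The paper's proof is a two-line citation: it combines Tate's theorem (full faithfulness of $\Gamma\mapsto T_p\Gamma$ as a functor to $G_K$-representations, \cite[Subsection 4.2, Corollary 1]{Ta}) with Breuil's theorem that restriction from $G_K$ to $G_{K_\infty}$ is fully faithful on the relevant category of representations (\cite[Theorem 3.4.3]{Br_AZ}); that is, it factors the functor through the $G_K$-equivariant Tate module and never mentions Kisin modules. You instead transport the question across the anti-equivalence of Theorem \ref{BKclass} (1) and prove full faithfulness of $\TSG$ on $\ModSGffr$ by factoring it through $\SGm\mapsto\OEp\otimes_\SG\SGm$ and Fontaine's equivalence for etale $\phi$-modules, with Kisin's lattice full faithfulness as the key input. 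Both decompositions are legitimate and rest on comparably deep cited results; yours has the mild advantage of staying entirely on the $G_{K_\infty}$/norm-field side (no appeal to the $G_K$-action), while the paper's avoids invoking the classification theorem at all, which keeps the lemma logically independent of Theorem \ref{BKclass}. Two small corrections: the identification $\Hom_{\SG,\phi}(\SGm,\SGur)=\Hom_{\OEp,\phi}(\OEp\otimes_\SG\SGm,\hOEur)$ deserves a precise citation rather than the informal appeal to $u$-adic continuity (it is the finite $E$-height condition, not continuity, that forces the image of the lattice into $\SGur=\hOEur\cap W(R)$); and your remark that Kisin's full faithfulness of the base change to $\OEp$ "is where $p>2$ and $E$-height $\leq 1$ are used" is inaccurate --- that statement holds for arbitrary finite $E$-height and all $p$, and the hypothesis $p>2$ enters only through the classification theorem itself.
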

\begin{proof}
This follows from Tate's theorem (\cite[Subsection 4.2, Corollary 1]{Ta}) and \cite[Theorem 3.4.3]{Br_AZ}.
\end{proof}

Let us show the compatibility with the base change. Put $\SG'=W[[v]]$ and let $\phi:\SG'\to \SG'$ denote the natural $\phi$-semilinear map defined by $v\mapsto v^p$. Note that the polynomial $E'(v)=E(v^{p^n})$ is the Eisenstein polynomial of the uniformizer $\pi_n\in K_n$. Consider the categories $\mathrm{Mod}^{1,\phi}_{/\SG'_1}$, $\mathrm{Mod}^{1,\phi}_{/\SG'_\infty}$ and $\mathrm{Mod}^{1,\phi}_{/\SG'}$ of Kisin modules over $\SG'$ of $E'$-height $\leq 1$. Using the $W$-algebra homomorphism $\SG'\to W(R)$ defined by $v\mapsto [\upi^{1/p^n}]$, we define a similar functor $T^{*}_{\SG'}$ to $\TSG$. The homomorphism of $W$-algebras $\SG\to \SG'$ defined by $u\mapsto v^{p^n}$ induces natural functors $(-)':\ModSGfinf\to \mathrm{Mod}^{1,\phi}_{/\SG'_\infty}$ and $(-)':\ModSGffr\to \mathrm{Mod}^{1,\phi}_{/\SG'}$ by
\[
\SGm\mapsto \SGm'=\SG'\otimes_\SG \SGm,\quad \phi_{\SGm'}=\phi\otimes \phi_{\SGm}.
\]
Then we have a natural isomorphism of $G_{K_\infty}$-modules $\TSG(\SGm)\to T^{*}_{\SG'}(\SGm')$. On the other hand, these categories classify finite flat group schemes killed by some $p$-power and $p$-divisible groups over $\okeyn$ by Theorem \ref{BKclass}, and we let $\cG'(-)$ denote the anti-equivalences of the theorem over $\okeyn$. 

\begin{prop}\label{BCK}
Let $\SGm$ be an object of the category $\ModSGffr$ ({\it resp.} $\ModSGfinf$) and $\SGm'$ be the associated object of the category $\mathrm{Mod}^{1,\phi}_{/\SG'}$ ({\it resp.} $\mathrm{Mod}^{1,\phi}_{/\SG'_\infty}$). Then there exists a natural isomorphism
\[
\cG'(\SGm') \to \okeyn\times_{\okey} \cG(\SGm)
\]
of $p$-divisible groups ({\it resp.} finite flat group schemes) over $\okeyn$ which respectively makes the following diagrams commutative:
\[
\xymatrix{
T_p\cG(\SGm)|_{G_{K_\infty}} \ar[r]_{\sim}\ar[d]_{\wr} & T_p\cG'(\SGm')|_{G_{K_\infty}}\ar[d]_{\wr} \\
\TSG(\SGm) \ar[r]_{\sim} & T^{*}_{\SG'}(\SGm'),
}
\]
\[
\xymatrix{
\cG(\SGm)(\okbar)|_{G_{K_\infty}} \ar[r]_{\sim}\ar[d]_{\wr} & \cG'(\SGm')(\okbar)|_{G_{K_\infty}}\ar[d]_{\wr} \\
\TSG(\SGm) \ar[r]_{\sim} & T^{*}_{\SG'}(\SGm').
}
\]
\end{prop}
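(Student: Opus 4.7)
The plan is to reduce to a Galois-module statement via Lemma \ref{ffthm}. Noting that $\pi_{n+m}$ is a compatible system of $p$-power roots of $\pi_n$, the field $K_{n,\infty}$ equals $K_\infty$, so the lemma applied to $K_n$ in place of $K$ shows that $T_p(-)|_{G_{K_\infty}}$ is fully faithful on $p$-divisible groups over $\okeyn$. Hence in the first case it suffices to construct the desired isomorphism on Tate modules as $G_{K_\infty}$-modules, lift it to a morphism of $p$-divisible groups by full faithfulness, and observe that this morphism is automatically an isomorphism because it is so on Tate modules.

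To produce the Galois comparison, I will construct a natural $G_{K_\infty}$-equivariant isomorphism $\iota_\SGm\colon \TSG(\SGm)\to T^{*}_{\SG'}(\SGm')$. The map $\SG\to\SG'$, $u\mapsto v^{p^n}$, commutes with $\phi$ and is compatible with the $W$-algebra maps $\SG\to W(R)$, $\SG'\to W(R)$ defined by $u\mapsto[\upi]$, $v\mapsto[\upi^{1/p^n}]$, because $[\upi^{1/p^n}]^{p^n}=[\upi]$ in $W(R)$. Since each component of $\upi^{1/p^n}\in R$ is the reduction mod $p$ of some $\pi_m$, the element $[\upi^{1/p^n}]$ is $G_{K_\infty}$-fixed in $W(R)$, which gives the $G_{K_\infty}$-equivariance of $\iota_\SGm$. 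For $\SGm\in\ModSGffr$, the composite $\varepsilon_{\SGm'}^{-1}\circ\iota_\SGm\circ\varepsilon_\SGm$ is then a $G_{K_\infty}$-equivariant isomorphism
\[
T_p(\okeyn\times_{\okey}\cG(\SGm))|_{G_{K_\infty}}=T_p\cG(\SGm)|_{G_{K_\infty}}\xrightarrow{\sim}T_p\cG'(\SGm')|_{G_{K_\infty}},
\]
which I lift via full faithfulness to the desired isomorphism $\cG'(\SGm')\to\okeyn\times_{\okey}\cG(\SGm)$ over $\okeyn$. The commutativity of the first diagram is tautological from this construction.

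For the finite flat case I will take a resolution $0\to\SGm_1\to\SGm_2\to\SGm\to 0$ by objects of $\ModSGffr$ as in Theorem \ref{BKclass}(3). Flatness of $\SG\to\SG'$ yields a corresponding resolution of $\SGm'$. Applying $\cG$ and $\cG'$ produces exact sequences of fppf sheaves, and the isomorphisms obtained in the previous step for $\SGm_1$ and $\SGm_2$, being natural in the source Kisin module, will commute with the connecting maps $\cG(f)$ and $\cG'(f')$ and hence descend to an isomorphism of kernels $\cG'(\SGm')\to\okeyn\times_{\okey}\cG(\SGm)$. Combining the commutative diagram of Theorem \ref{BKclass}(3) over both $K$ and $K_n$ with the compatibility on the resolvents gives the second diagram of the proposition. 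Independence of the resolution then follows from the standard argument that any two such resolutions are dominated by a third, forcing the resulting isomorphism to be canonical.

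The only substantive point is the $G_{K_\infty}$-invariance of $[\upi^{1/p^n}]\in W(R)$ used to obtain the equivariance of $\iota_\SGm$; every other step is a formal consequence of the Breuil-Kisin classification, the naturality of $\varepsilon_{(-)}$, and the full faithfulness of $T_p(-)|_{G_{K_\infty}}$. I expect no serious obstacle beyond carefully tracking this equivariance and the resolution-independence bookkeeping.
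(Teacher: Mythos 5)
Your proposal is correct and follows essentially the same route as the paper: the free case via the full faithfulness of $T_p(-)|_{G_{K_\infty}}$ from Lemma \ref{ffthm} (using the natural comparison $\TSG(\SGm)\to T^{*}_{\SG'}(\SGm')$, which the paper records just before the proposition), and the torsion case by taking a resolution as in Theorem \ref{BKclass} (3), with independence of the resolution handled as in \cite[Lemma 2.3.4]{Ki_Fcrys}. The extra details you supply (that $K_\infty$ is also the relevant tower over $K_n$, and the $G_{K_\infty}$-invariance of $[\upi^{1/p^n}]$) are accurate and merely make explicit what the paper leaves implicit.
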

\begin{proof}
The assertion for the category $\ModSGffr$ follows from Lemma \ref{ffthm}, and this implies the assertion for $\ModSGfinf$ by taking a resolution of $\SGm$ as in Theorem \ref{BKclass} (3). Note that the isomorphism $\cG'(\SGm') \to \okeyn\times_{\okey} \cG(\SGm)$ is independent of the choice of a resolution (\cite[Lemma 2.3.4]{Ki_Fcrys}).
\end{proof}

Next we show the compatibility with Cartier duality. For the element $\mathfrak{t}\in \SGur$, we let $\mathfrak{t}_n$ denote its image in the ring $\SGur_n=\SGur/p^n\SGur$.

\begin{prop}\label{CarGM}
Let $\SGm$ be an object of the category $\ModSGffr$ ({\it resp.} $\ModSGfinf$) and $\SGm^\vee$ be its dual object. Then there exists a natural isomorphism $\cG(\SGm)^\vee\to \cG(\SGm^\vee)$ of $p$-divisible groups ({\it resp.} finite flat group schemes) over $\okey$ such that the induced map 
\begin{align*}
&\delta_\SGm: T_p(\cG(\SGm)^\vee)\to T_p\cG(\SGm^\vee)\overset{\varepsilon_{\SGm^\vee}}{\to} \TSG(\SGm^\vee)\\
(\text{{\it resp. }}&\delta_\SGm: \cG(\SGm)^\vee(\okbar)\to \cG(\SGm^\vee)(\okbar) \overset{\varepsilon_{\SGm^\vee}}{\to} \TSG(\SGm^\vee))
\end{align*}
respectively makes the following diagrams of $G_{K_\infty}$-modules commutative:
\[
\xymatrix{
T_p\cG(\SGm)\times T_p(\cG(\SGm)^\vee)\ar@<-6ex>[d]^{\wr}_{\varepsilon_\SGm}\ar@<+6ex>[d]^{\wr}_{\delta_\SGm} \ar[r]& \bZ_p(1) \ar[d]^{\wr}\\
\TSG(\SGm)\times\TSG(\SGm^\vee)\ar[r]_-{\langle\ ,\ \rangle_\SGm} & \bZ_p\mathfrak{t},
}
\]
\[
\xymatrix{
\cG(\SGm)(\okbar)\times \cG(\SGm)^\vee(\okbar)\ar[r]\ar@<-6ex>[d]^{\wr}_{\varepsilon_\SGm}\ar@<+6ex>[d]^{\wr}_{\delta_\SGm} & \bZ/p^n\bZ(1) \ar[d]^{\wr}\\
\TSG(\SGm)\times\TSG(\SGm^\vee)\ar[r]_-{\langle\ ,\ \rangle_\SGm} & (\bZ/p^n\bZ)\mathfrak{t}_n.
}
\]
Here the top arrows are the Cartier pairings and the right vertical arrows are the isomorphisms induced by $(\zeta_{p^n})_{n\in\bZ_{\geq 0}}\mapsto \mathfrak{t}$.
\end{prop}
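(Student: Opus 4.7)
The plan is to treat the free case $\SGm \in \ModSGffr$ first and deduce the torsion case from it via a resolution as in Theorem~\ref{BKclass}~(3). The key leverage in the free case is Lemma~\ref{ffthm}: any $G_{K_\infty}$-equivariant isomorphism of Tate modules automatically comes from a unique isomorphism of the underlying $p$-divisible groups over $\okey$, so it suffices to work on the Galois side.

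In the free case, I would define the $G_{K_\infty}$-equivariant map
\[
\delta_\SGm : T_p(\cG(\SGm)^\vee)|_{G_{K_\infty}} \to \TSG(\SGm^\vee)
\]
as the unique homomorphism determined by requiring
\[
\langle \varepsilon_\SGm(x), \delta_\SGm(y) \rangle_\SGm = \langle x, y\rangle_{\mathrm{Cart}} \cdot \mathfrak{t}
\]
for all $x \in T_p\cG(\SGm)$ and $y \in T_p(\cG(\SGm)^\vee)$, where $\langle\ ,\ \rangle_{\mathrm{Cart}}$ denotes the Cartier pairing and $\bZ_p(1) \simeq \bZ_p\mathfrak{t}$ via $(\zeta_{p^n})\mapsto \mathfrak{t}$. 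The perfectness of both pairings forces $\delta_\SGm$ to be an isomorphism. Then $\varepsilon_{\SGm^\vee}^{-1}\circ \delta_\SGm$ is a $G_{K_\infty}$-equivariant isomorphism $T_p(\cG(\SGm)^\vee) \to T_p\cG(\SGm^\vee)$, which by Lemma~\ref{ffthm} descends to a unique isomorphism $\cG(\SGm)^\vee \to \cG(\SGm^\vee)$ of $p$-divisible groups over $\okey$. The commutativity of the first diagram is then built into the definition of $\delta_\SGm$.

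For the torsion case $\SGm \in \ModSGfinf$, take a resolution $0 \to \SGm_1 \to \SGm_2 \to \SGm \to 0$ by free Kisin modules and apply Theorem~\ref{BKclass}~(3) to get $0 \to \cG(\SGm) \to \Gamma_2 \to \Gamma_1 \to 0$. By the exactness of the Caruso--Liu duality on $\ModSGffr$ (see \cite[Subsection~2.4]{CL}), the dualized sequence $0 \to \SGm_2^\vee \to \SGm_1^\vee \to \SGm^\vee \to 0$ is again a resolution, and the free case applied to $\SGm_1,\SGm_2$ identifies it with the Cartier dual $0 \to \Gamma_1^\vee \to \Gamma_2^\vee \to \cG(\SGm)^\vee \to 0$. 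This gives the isomorphism $\cG(\SGm)^\vee \to \cG(\SGm^\vee)$, independent of the resolution by the standard rigidity argument.

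The main obstacle will be verifying the commutativity of the second diagram on $\okbar$-points. To do this, I would reduce the first diagram modulo $p^n$ for the free resolution and carefully track two compatibilities through the snake lemma: first, that the Cartier pairing on $\cG(\SGm)\times \cG(\SGm)^\vee$ matches the reduction mod $p^n$ of the Tate-module Cartier pairing on $T_p\Gamma_2 \times T_p\Gamma_2^\vee$; second, that $\langle\ ,\ \rangle_\SGm$ matches the reduction mod $p^n$ of $\langle\ ,\ \rangle_{\SGm_2}$ under the inclusion $\SGm^\vee \hookrightarrow \SGm_1^\vee$ composed with projection from $\SGm_2^\vee$. The care needed is essentially bookkeeping of the identification $\bZ_p(1) \simeq \bZ_p\mathfrak{t}$ and its mod-$p^n$ version $\bZ/p^n\bZ(1) \simeq (\bZ/p^n\bZ)\mathfrak{t}_n$, which is the content of Caruso's comparison in \cite{Ca_D}.
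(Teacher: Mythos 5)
Your proposal follows essentially the same route as the paper: in the free case $\delta_\SGm$ is defined by forcing the Cartier pairing to match $\langle\ ,\ \rangle_\SGm$ under $\varepsilon_\SGm$ and the resulting isomorphism of Tate modules is descended to $p$-divisible groups by Lemma \ref{ffthm}, while the torsion case is handled by dualizing a free resolution and tracking the pairings through the snake lemma, exactly as in the paper. The only issues are bookkeeping slips in your outline of the torsion case: the Cartier dual of $0\to\cG\to\Gamma_2\to\Gamma_1\to 0$ is $0\to\cG^\vee\to\Gamma_1^\vee\to\Gamma_2^\vee\to 0$ (so $\cG^\vee$ is the kernel, not the cokernel, of the dual isogeny), the surjection onto $\cG(\okbar)$ comes from $T_p\Gamma_1$ rather than $T_p\Gamma_2$, and $\SGm^\vee$ is a quotient of $\SGm_1^\vee$ rather than a submodule.
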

\begin{proof}
For an object $\SGm$ of the category $\ModSGffr$, define $\delta_\SGm: T_p(\cG(\SGm)^\vee)\to \TSG(\SGm^\vee)$ to be the unique isomorphism which fits into the commutative diagram of $G_{K_\infty}$-modules
\[
\xymatrix{
T_p(\cG(\SGm)^\vee) \ar[r] \ar@{.>}[d] & \Hom_{\bZ_p}(T_p(\cG(\SGm)),\bZ_p(1)) \ar[d] \\
\TSG(\SGm^\vee) \ar[r] & \Hom_{\bZ_p}(\TSG(\SGm), \bZ_p\mathfrak{t}),
}
\]
where the top ({\it resp.} the bottom) arrow is induced by the Cartier pairing ({\it resp.} the pairing $\langle\ ,\ \rangle_\SGm$) and the right vertical arrow is induced by $\varepsilon_\SGm$ and the natural isomorphism $\bZ_p(1)\to \bZ_p\mathfrak{t}$ as in the proposition. Then the isomorphism $\varepsilon_{\SGm^\vee}^{-1}\circ \delta_\SGm$ defines the desired isomorphism $\cG(\SGm)^\vee \to \cG(\SGm^\vee)$ uniquely by Lemma \ref{ffthm}.  

Next let $\SGm$ be an object of the category $\ModSGfinf$. Take a resolution $0\to \SGm_1\to \SGm_2\to \SGm\to 0$ of $\SGm$ as in Theorem \ref{BKclass} (3). Then this induces a resolution $0\to \SGm_2^\vee\to \SGm_1^\vee\to \SGm^\vee\to 0$ by the snake lemma. Put $\cG=\cG(\SGm)$ and $\Gamma_i=\cG(\SGm_i)$. By the snake lemma, we have an epimorphism $w: \Gamma_1[p^n]\to \cG$ whose dual map $w^\vee$ fits into the commutative diagram with exact rows
\[
\xymatrix{
0 \ar[r] & \cG(\SGm^\vee) \ar[r]\ar@{.>}[d]& \cG(\SGm_1^\vee) \ar[r]\ar[d]_{\wr} & \cG(\SGm_2^\vee) \ar[r]\ar[d]_{\wr} & 0\\
0 \ar[r] & \cG^\vee \ar[r]^{w^\vee}& \Gamma_1^\vee \ar[r] & \Gamma_2^\vee \ar[r]& 0.
}
\]
Thus we get an isomorphism $\cG(\SGm^\vee)\to \cG^\vee$, which is independent of the choice of a resolution (\cite[Lemma 2.3.4]{Ki_Fcrys}). On the other hand, we can check the commutativity of the diagram of the Cartier pairings
\[
\xymatrix{
T_p\Gamma_1 \ar[r]\ar[d] & \Hom(T_p(\Gamma_1^\vee), \bZ_p(1))\ar[d]\\
\cG(\okbar)\ar[r] &\Hom(\cG^\vee(\okbar),\bZ/p^n\bZ(1))
}
\]
and of a similar diagram for $\TSG(\SGm)$. Hence we can prove the compatibility with the duality pairings by the functoriality of the connecting homomorphism of the snake lemma. 
\end{proof}



\subsection{Proof of the main theorem} 

Now we prove Theorem \ref{main}. By Theorem \ref{TF}, Theorem \ref{ramdualp} and Proposition \ref{CarGM}, the assertion for the upper ramification subgroups is reduced to showing that the isomorphism 
\[
\delta_\SGm: \cG(\SGm)^\vee(\okbar)\to \TSG(\SGm^\vee)=\cH(\SGm^\vee)(R)
\]
induces an isomorphism of the lower ramification subgroups
\[
(\cG(\SGm)^\vee)_i(\okbar) \to \cH(\SGm^\vee)_i(R)
\]
for any $i\in \bQ_{\geq 0}$. By the definition of the map $\delta_\SGm$, it is enough to show the assertion of Theorem \ref{main} for the lower ramification subgroups. Namely, for an object $\SGm$ of the category $\ModSGf$, we reduced ourselves to showing the natural map 
\[
\varepsilon_\SGm: \cG(\SGm)(\okbar)\to \TSG(\SGm)=\cH(\SGm)(R)
\]
induces an isomorphism of the $i$-th lower ramification subgroups for any $i$. For this, by Proposition \ref{BCK} and replacing $K_1$ by $K$, we may assume that $e$ is divisible by $p$ and the entries of a representing matrix of $\phi_\SGm$ is contained in the subring $k[[u^p]]$ of $\SG_1$. Note that, by Corollary \ref{lowramboundp} and Remark \ref{lowrambound}, the $i$-th lower ramification subgroups of both sides vanish for $i>e/(p-1)$. Thus we are reduced to showing the theorem below. 

\begin{thm}\label{keythm}
Let $\SGm$ be an object of $\ModSGf$. Suppose that $e$ is divisible by $p$ and the entries of a representing matrix of $\phi_\SGm$ is contained in the subring $k[[u^p]]$ of $\SG_1$. Consider the isomorphism of $k$-algebras $k[u]/(u^e)\to \okey/p\okey$ defined by $u\mapsto \pi$, by which we identify both sides. Then there exists an isomorphism of schemes
\[
\eta_\SGm: (\okey/p\okey)\times_{\okey}\cG(\SGm) \to (k[u]/(u^e))\times_{k[[u]]}\cH(\SGm)
\]
which preserves the zero section and makes the following diagram commutative for any non-negative rational number $i\leq e$:
\[
\xymatrix{
\cG(\SGm)(\okbar)\ar[r]^{\varepsilon_\SGm}\ar[d] & \cH(\SGm)(R)\ar[d] \\
\cG(\SGm)(\okbar/{m}_{\Kbar}^{\geq i}) \ar[r]_-{\eta_\SGm} &\cH(\SGm)(R/{m}_{R}^{\geq i}).
}
\]
Here the bottom arrow is induced by the isomorphism $\prjt_0: R/{m}_{R}^{\geq i}\to \okbar/{m}_{\Kbar}^{\geq i}$ lying over the isomorphism $k[u]/(u^e)\to \okey/p\okey$.
\end{thm}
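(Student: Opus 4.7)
The plan is to compare the two sides via their explicit affine presentations from Breuil's formula (recalled in Subsection~\ref{RecallModGr}) and the defining equations of $\cH(\SGm)$. First, fix a basis $f_1,\ldots,f_d$ of $\SGm$ realizing the hypothesis, so that $\phi_\SGm$ has matrix $A\in M_d(k[[u^p]])$ and
\[
\cH(\SGm)\otimes k[u]/(u^e) = \Spec\bigl((k[u]/u^e)[Y_i]/(Y_i^p-\textstyle\sum_j \bar{A}_{j,i}Y_j)\bigr).
\]
On the Breuil side, I would construct an adapted basis $(e_i)$ of $\cM=\cM_\SG(\SGm)=S_1\otimes_{\phi,\SG_1}\SGm$ of the form $e_i=\lambda(1\otimes f_i)$ for a carefully chosen unit $\lambda\in S_1^\times$. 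Taking $r_i=e-\min_k v_u(A_{k,i})$ and using $u^e\equiv E(u)\pmod{p}$, $\phi_1(E(u))=c$, and $\phi_1(st)=\phi_1(s)\phi(t)$, a direct computation shows $G_{j,i}=c\cdot \phi(A_{j,i})\cdot u^{-p(e-r_i)}$ up to a common $\lambda$-factor; the assumption on $A$ together with the expansion of $c=p^{-1}\phi(E(u))$ (ensuring $c$ modulo $\Fil^p S_1$ lies in $k[u^p]/(u^{ep})$) verifies $\tilde{G}\in GL_d(k[u^p]/u^{ep})$, so Breuil's formula applies and produces
\[
R_\cM/pR_\cM=(\okey/p)[X_i]/(X_i^p+\pi^{e-r_i}F(\pi)^{-1}\textstyle\sum_j \bar{a}_{j,i}X_j),
\]
with $\bar a_{j,i}$ the $\phi^{-1}$-semilinear lift of $\tilde{G}_{j,i}$ via $u^p\mapsto\pi$.

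The key identification, obtained by combining the formula for $\bar a_{j,i}$ with $e-r_i = \min_k v_u(A_{k,i})$, is
\[
-\pi^{e-r_i}F(\pi)^{-1}\bar{a}_{j,i}=-F(\pi)^{-1}\bar{c}^{\#}(\pi)\,\bar{A}_{j,i}
\]
(the $\lambda$-factors canceling diagonally), so that the scalar rescaling $Y_i\mapsto \mu X_i$ with $\mu\in(\okey/p)^\times$ satisfying $\mu^{p-1}=-F(\pi)\bar{c}^{\#}(\pi)^{-1}$ defines $\eta_\SGm$ as an isomorphism of pointed schemes. Such $\mu$ exists because at the residue field $-F(0)\bar{c}^{\#}(0)^{-1}=c_0\bar{c}_0^{-1/p}=\bar{c}_0^{(p-1)/p}=(\bar{c}_0^{1/p})^{p-1}$ is a $(p-1)$-th power in $k^\times$ by perfectness of $k$, while the $(p-1)$-th power map is bijective on the pro-$p$ group $1+\mathfrak{m}\subset(\okey/p)^\times$ since $\gcd(p-1,p)=1$.

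For the compatibility with $\varepsilon_\SGm$ modulo $m_{\Kbar}^{\geq i}$ (with $i\leq e$), I would choose $\lambda\in S_1^\times$ with $\lambda(\pi)=\mu^{-p}$ in $\okey/p$ and unravel Breuil's construction of $\varepsilon_\SGm$. Given $f\in\cG(\SGm)(\okbar)$ with $x_i=f(X_i)$, the map $\Psi(f)\in\Hom_{S,\Fil^1,\phi_1}(\cM,H_1)$ is characterized modulo $\Fil^p$ by $e_i\mapsto y_i$, where $y_i=\prjt_1^{-1}(\bar{x}_i)\in R^\PD$ satisfies $\prjt_0(y_i)=\bar{x}_i^p$; under the isomorphism $\TSG(\SGm)\simeq\Tcrys(\cM)$ sending $g\mapsto(s\otimes m\mapsto s\phi(g(m)))$, the corresponding $g\in\cH(\SGm)(R)$ with $z_i=g(f_i)$ then satisfies $\tilde g(e_i)=\lambda(\upi)z_i^p\equiv y_i\pmod{\Fil^1 R^\PD}$. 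Projecting via $\prjt_0$ onto $\okbar/m_{\Kbar}^{\geq i}$ (an isomorphism for $i\leq e$) yields $\lambda(\pi)z_{i,0}^p\equiv \bar{x}_i^p$, and extracting $p$-th roots (a bijection on the perfect ring $\tokbar$) gives $z_{i,0}\equiv\lambda(\pi)^{-1/p}\bar{x}_i=\mu\bar{x}_i\pmod{m_{\Kbar}^{\geq i}}$, which is precisely the action of $\eta_\SGm$ at the reduced-point level. The main obstacle is the careful coordination linking the adapted-basis scalar $\lambda\in S_1^\times$ and the rescaling scalar $\mu\in(\okey/p)^\times$ via the identity $\lambda(\pi)=\mu^{-p}$; this is what makes the coefficient-matching in the second paragraph and the $p$-th-root compatibility with $\varepsilon_\SGm$ in the third paragraph point to the same $\mu$.
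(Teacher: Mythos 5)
Your overall strategy---compare Breuil's explicit presentation of the affine algebra of $\cG(\SGm)$ with the defining equations of $\cH(\SGm)$, then trace the explicit description of $\Psi$ and $\varepsilon_\SGm$---is the same as the paper's, but two steps of the execution break down.

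First, the adapted basis. A single unit rescaling $e_i=\lambda(1\otimes f_i)$ of the given basis is not adapted in general, and the formula $r_i=e-\min_k v_u(A_{k,i})$ is wrong: for your matrix $G_{j,i}=c\,\phi(A_{j,i})u^{-p(e-r_i)}$ to lie in $GL_d(S_1)$ you would need $v_u(\det A)=\sum_i\min_k v_u(A_{k,i})$, which already fails for $A=\left(\begin{smallmatrix}1&1\\0&u^e\end{smallmatrix}\right)$; there $\Fil^1\cM$ contains $1\otimes f_1-1\otimes f_2$, which is not in $\sum_i u^{r_i}S_1e_i+(\Fil^pS)\cM$ with your $r_1=r_2=e$, so Breuil's recipe does not apply to this ``basis''. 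One must first diagonalize $A$ by a Smith normal form over the discrete valuation ring $k[[u^p]]$, i.e.\ choose $P,Q\in GL_d(k[[u^p]])$ with $PAQ=\diag(u^{e-r_1},\ldots,u^{e-r_d})$ and pass to the basis $(n_1,\ldots,n_d)=(m_1,\ldots,m_d)\phi^{-1}(Q)$; this is what the paper does. Carried out this way, and using that $c=\phi(-F(u))$ in $\tilde S_1$ together with the $\phi^{-1}$-semilinear lift of $\tilde G$, the unit you denote $-F(\pi)\bar c^{\#}(\pi)^{-1}$ comes out equal to $1$: the coefficients match on the nose, $\eta_\SGm$ is simply $X_i\mapsto X_i$, and the whole $\lambda$--$\mu$ coordination is unnecessary (your existence argument for $\mu$ is fine, but it is solving a problem that does not arise).

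Second, the compatibility with $\varepsilon_\SGm$. Breuil's $\Psi(f)$ is characterized by $e_i\mapsto y_i$ modulo $\Fil^pH_1$, not modulo $\Fil^1$, and the step ``extracting $p$-th roots, a bijection on the perfect ring $\tokbar$'' is false: $\tokbar=\okbar/p\okbar$ is only semiperfect, the Frobenius having kernel $m_{\Kbar}^{\geq e/p}/m_{\Kbar}^{\geq e}$. From $z_{i,0}^p\equiv\bar x_i^p$ you can therefore only conclude $z_{i,0}\equiv\bar x_i$ modulo $m_{\Kbar}^{\geq e/p}$, which proves the commutativity of the square only in the range $i\leq e/p$. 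The paper avoids this loss of precision by reading the congruence $\phi(z_i)\equiv y_i\pmod{\Fil^pR^\PD}$ through $\prjt_1$ rather than $\prjt_0$: since $R/m_R^{\geq ep}\to R^\PD/\Fil^pR^\PD$ is an isomorphism and $\Ker(\prjt_1\colon R\to\tokbar)=m_R^{\geq ep}$, one gets $\prjt_1(\phi(z_i))=z_{i,1}^p=z_{i,0}=\prjt_1(y_i)=\bar x_i$ exactly in $\tokbar$, hence the statement for all $i\leq e$. You should replace the $\prjt_0$/$p$-th-root argument by this $\prjt_1$ argument.
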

\begin{proof}
Let $m_1,\ldots,m_d$ be a basis of $\SGm$ such that we can write as 
\[
\phi_\SGm(m_1,\ldots, m_d)=(m_1,\ldots,m_d)A
\]
for some $A\in M_d(k[[u^p]])$. We can take matrices $P,Q\in GL_d(k[[u^p]])$ such that 
\[
P A Q=\diag(u^{e-r_1},\ldots,u^{e-r_d})
\]
for some non-negative integers $r_i$ divisible by $p$ with $r_i\leq e$. Here $\diag(a_1,\ldots,a_d)$ denotes the diagonal matrix whose $(i,i)$-th entry is $a_i$. Set a basis $n_1,\ldots, n_d$ of $\SGm$ to be $(n_1,\ldots,n_d)=(m_1,\ldots, m_d)\phi^{-1}(Q)$. Then we have
\[
\phi_\SGm(n_1,\ldots,n_d)=(n_1,\ldots,n_d)\phi^{-1}(Q)^{-1}P^{-1}\diag(u^{e-r_1},\ldots,u^{e-r_d}).
\]
Thus the object $\cM=\cM_\SG(\SGm)$ is described as 
\begin{align*}
&\Fil^1\cM=\Span_S(u^{r_1}\otimes n_1,\ldots,u^{r_d}\otimes n_d)+(\Fil^pS)\cM, \\
&\phi_1(u^{r_1}\otimes n_1,\ldots,u^{r_d}\otimes n_d)=(1\otimes n_1,\ldots,1\otimes n_d)G,
\end{align*}
where $G=cQ^{-1}\phi(P)^{-1}\in GL_d(S_1)$ with $c=\phi_1(E(u))$ as before. Take lifts $\hat{P}$ and $\hat{Q}$ of the matrices $P$ and $Q$ in $GL_d(W[[u^p]])$, respectively. Put $\hat{G}=\phi(-F(u))\hat{Q}^{-1}\phi(\hat{P})^{-1}\in GL_d(W[[u^p]])$. Since we have the equality $c=\phi(-F(u))$ in the ring $\tilde{S}_1$, the images of the matrices $G$ and $\hat{G}$ in $GL_d(\tilde{S}_1)$ coincide with each other. We write this image as $\tilde{G}$, which is contained in the subgroup $GL_d(k[u^p]/(u^{ep}))$.

Note that we have a commutative diagram of $W$-algebras
\[
\xymatrix{
W[[u]] \ar[rr]^{u\mapsto \pi} \ar[d] & &\cO_{K} \ar[d] \\
k[[u]] \ar[r] & k[u]/(u^{e})\ar[r]_-{\sim} & \okey/p\okey.
}
\]
Consider the composite map 
\[
W[[u]]\to \cO_{K}/p\cO_{K}\to \cO_{K_1}/p\cO_{K_1}\overset{\sim}{\to} k[u]/(u^{ep})=\tilde{S}_1,
\]
where the last arrow is the $\phi$-semilinear isomorphism defined by $\pi_1\mapsto u$. 
Then the image of the matrix $-F(u)\phi^{-1}(\hat{Q})^{-1}\hat{P}^{-1}$ by this composite map coincides with $\tilde{G}$. Let $a_{i,j}(u)\in W[[u]]$ be the $(i,j)$-th entry of this matrix. From the explicit description of the affine algebra $R_{\cM}$ of $\cG(\SGm)=\Gr^\mathrm{B}(\cM)$ recalled in Subsection \ref{RecallModGr}, we see that $R_{\cM}$ is defined by the system of equations over $\okey$
\[
X_i^p+\frac{\pi^{e-r_i}}{F(\pi)}(\sum_{j=1}^{d}a_{j,i}(\pi)X_j)\ \ (i=1,\ldots,d),
\]
where $a_{i,j}(\pi)$ denotes the image of $a_{i,j}(u)$ by the map $W[[u]]\to\okey$ defined as in the above diagram. On the other hand, the defining equations of $\cH(\SGm)$ over $k[[u]]$ are
\[
X_i^p+\frac{u^{e-r_i}}{F(u)}(\sum_{j=1}^{d}\bar{a}_{j,i}(u)X_j)\ \ (i=1,\ldots,d),
\]
where $\bar{a}_{i,j}(u)$ denotes the image of $a_{i,j}(u)$ by the natural map $W[[u]]\to k[[u]]$, and the zero section of $\cH(\SGm)$ is by definition $X_1=\cdots=X_d=0$. This implies that there exists an isomorphism
\[
\eta_\SGm: (\okey/p\okey)\times_{\okey}\cG(\SGm)\to (k[u]/(u^{e})) \times_{k[[u]]}\cH(\SGm)
\]
of schemes over the isomorphism $k[u]/(u^{e}) \simeq \okey/p\okey$ defined by $X_i\mapsto X_i$. Thus, for $i\leq e$, we get a bijection
\[
\eta_{\SGm}: \cG(\SGm)(\okbar/{m}_{\Kbar}^{\geq i}) \to \cH(\SGm)(R/{m}_{R}^{\geq i})
\]
satisfying $\eta_{\SGm}(0)=0$.

To prove the compatibility of $\varepsilon_\SGm$ and $\eta_{\SGm}$, let us consider the diagram
\[
\xymatrix{
\okbar \ar[r]\ar[d] & (\tokbar)^\PD & R^\PD \ar[l]_-{\prjt_1}^-{\sim} & R \ar[l]_-{\phi}\ar[d] \\
\tokbar & & & R/{m}_{R}^{\geq e}.\ar[lll]_{\prjt_0}^{\sim}
}
\]
Let $x=(x_1,\ldots,x_d)$ be an element of $\Spec(R_{\cM})(\okbar)$ and $z=(n_i\mapsto z_i)$ be the corresponding element of $\TSG(\SGm)=\Hom_{\SG,\phi}(\SGm,R)$ via the composite
\[
\Spec(R_{\cM})(\okbar)\simeq \cG(\SGm)(\okbar)\overset{\varepsilon_\SGm}{\to}T^{*}_{\SG}(\SGm).
\] 
Let $y_i\in R$ be the element such that $\prjt_1(y_i)$ coincides with the image $\bar{x}_i$ of $x_i$ in $\tokbar$. Then, in the ring $R^\PD$,  we have $\phi(z_i)-y_i\in \Fil^pR^\PD$. Put $y_i=(y_{i,0},y_{i,1},\ldots)$ and $z_i=(z_{i,0},z_{i,1},\ldots)$ with $y_{i,j},z_{i,j}\in\tokbar$. Since the natural map $R\to R^\PD$ induces an isomorphism
\[
R/{m}_{R}^{\geq ep}\to R^\PD/\Fil^pR^\PD
\]
and the kernel of the map $\prjt_1:R\to \tokbar$ coincides with the ideal ${m}_{R}^{\geq ep}$, we have $y_{i,1}=z_{i,1}^p=z_{i,0}$. This implies $\bar{x}_i=z_{i,0}$ and the compatibility of $\varepsilon_\SGm$ and $\eta_{\SGm}$ as in the theorem follows. Hence we conclude the proof of Theorem \ref{main}.
\end{proof}

Note that we have also shown the following corollary.

\begin{cor}
Let $\SGm$ be an object of the category $\ModSGf$. Consider the $k$-algebra $k[[v]]$ as a $k[[u]]$-algebra by the map $u\mapsto v^p$. By the $k$-algebra isomorphism $k[[v]]/(v^{ep})\to \okp/p\okp$ defined by $v \mapsto \pi_1$, we identify both sides. Then we have an isomorphism
\[
(\okp/p\okp)\times_{\okey}\cG(\SGm)\to (k[[v]]/(v^{ep}))\times_{k[[u]]}\cH(\SGm)
\]
of schemes over $k[[v]]/(v^{ep})\simeq \okp/p\okp$ preserving the zero section.
\end{cor}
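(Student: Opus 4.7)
The plan is to deduce this corollary from Theorem \ref{keythm} by the same base-change maneuver that reduces Theorem \ref{main} to Theorem \ref{keythm}.

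First, I would introduce the base-changed Kisin module $\SGm' = \SG'\otimes_{\SG}\SGm$ over $\SG' = W[[v]]$ via the map $u\mapsto v^p$. The key observation is that, when the role of $K$ is played by $K_1$ (whose absolute ramification index $ep$ is divisible by $p$), this $\SGm'$ automatically satisfies the additional hypotheses required for Theorem \ref{keythm}. Indeed, fixing any basis $m_1,\ldots,m_d$ of $\SGm$ with representing matrix $A(u)\in M_d(k[[u]])$ for $\phi_{\SGm}$, the induced basis $1\otimes m_1,\ldots,1\otimes m_d$ of $\SGm'$ gives the representing matrix $A(v^p)$ for $\phi_{\SGm'}=\phi\otimes \phi_{\SGm}$, whose entries lie in the subring $k[[v^p]]$ of $\SG'_1$ by construction.

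Next I would invoke Theorem \ref{keythm} for $\SGm'$ over $K_1$ to obtain a zero-section-preserving isomorphism of schemes
\[
\eta_{\SGm'}: (\okp/p\okp)\times_{\okp}\cG'(\SGm') \to (k[[v]]/(v^{ep}))\times_{k[[v]]}\cH_\cY(\SGm'),
\]
where $\cH_\cY$ denotes the analogue of $\cH$ for the base $\OcY = k[[v]]$. Composing this with the canonical isomorphism $\cG'(\SGm')\simeq \okp\times_{\okey}\cG(\SGm)$ from Proposition \ref{BCK}, with the base-change compatibility $\cH_\cY(\SGm')\simeq k[[v]]\times_{k[[u]]}\cH(\SGm)$, and with the identification $\okp/p\okp\simeq k[[v]]/(v^{ep})$ via $\pi_1\mapsto v$, one obtains the asserted isomorphism over $\okp/p\okp\simeq k[[v]]/(v^{ep})$.

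There is essentially no obstacle here beyond bookkeeping. The one point worth checking is that the auxiliary isomorphisms coming from Proposition \ref{BCK} and the base-change compatibility of $\cH$ preserve the zero section, but this is automatic: both are isomorphisms of group schemes, hence they match the identity sections on the two sides.
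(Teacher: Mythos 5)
Your proposal is correct and matches the paper's own (implicit) argument: the corollary is stated with the remark ``we have also shown'' precisely because the reduction step in the proof of Theorem \ref{main} passes to $\SGm'=\SG'\otimes_{\SG}\SGm$ over $K_1$, where the hypotheses of Theorem \ref{keythm} (ramification index $ep$ divisible by $p$, representing matrix $A(v^p)$ with entries in $k[[v^p]]$) hold automatically, and then applies Proposition \ref{BCK} together with the base-change compatibility of $\cH(-)$. Your additional remark that the auxiliary identifications preserve the zero section because they are group scheme isomorphisms is the right justification.
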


\begin{rmk}
Let $\SGm$ be an object of the category $\ModSGr$. For $j\in \bQ_{>0}$ ({\it resp.} $i\in \bQ_{\geq 0}$), let $\SGm^j$ ({\it resp.} $\SGm_i$) be the object which corresponds via the anti-equivalence $\cH(-)$ to the closed subgroup scheme $\cH(\SGm)^j$ ({\it resp.} $\cH(\SGm)_i$) of $\cH(\SGm)$. These objects define cofiltrations $\{\SGm^j\}_{j\in \bQ_{>0}}$ and $\{\SGm_i\}_{i\in \bQ_{\geq 0}}$ of $\SGm$ in the category $\ModSGr$. Note that, for a finite flat group scheme over a discrete valuation ring, its finite flat closed subgroup scheme is determined by the generic fiber. Therefore, for $r=1$, Theorem \ref{main} and \cite[Theorem 3.4.3]{Br_AZ} imply that the quotient $\SGm\to \SGm^j$ ({\it resp.} $\SGm\to \SGm_i$) also corresponds via the anti-equivalence $\cG(-)$ to the closed subgroup scheme $\cG(\SGm)^j$ ({\it resp.} $\cG(\SGm)_i$) of $\cG(\SGm)$. They can be considered as ``upper and lower ramification cofiltrations'' of the Kisin module $\SGm$.
\end{rmk}

\begin{rmk}
The way we have proved Theorem \ref{main} is based on switching from the upper to the lower ramification subgroups via duality. The author wonders if we can prove the theorem in an ``upper" way, namely by constructing a natural isomorphism between the sets of geometric connected components of tubular neighborhoods of $\cG(\SGm)$ and $\cH(\SGm)$ using the similarity of their affine algebras, even though they are in different characteristics.
\end{rmk}

\noindent
{\bf Acknowledgments.} The author would like to thank Victor Abrashkin, Toshiro Hiranouchi, Wansu Kim and Yuichiro Taguchi for stimulating discussions, Xavier Caruso for kindly answering his questions on the paper \cite{Ca_D} and the anonymous referee for helpful suggestions to improve this paper. This work was supported by Grant-in-Aid for Young Scientists B-21740023.



\end{document}